\theoremstyle{plain}
\newtheorem{proposition}{Proposition}
\let\today\relax
\def\ps@pprintTitle{%
    \let\@oddhead\@empty
    \let\@evenhead\@empty
    \def\@oddfoot{\footnotesize\itshape
         {Submitted preprint} \hfill\today}%
    \let\@evenfoot\@oddfoot
    }
\begin{document}

% Title of document, usually lower case except for first word
% and proper nouns.  Avoid unnecessary symbols.
\title{Landscapes of data sets and functoriality  of persistent homology} 

% If the title is too long for the running head, use
% the following command to specify a short title:
%\shorttitle{Shorter title}

% First Author
\author[1]{Wojciech Chach\'olski}             

% First Author's email address.  Must come before \address.
\ead{wojtek@kth.se}

% First author's postal address, with *line breaks* like below.  Do
% not use \\ to separate lines.  It will appear on one line in the
% article, but will be used exactly as typed below to send a
% complimentary copy of the journal, so ensure that it is a complete
% postal address with line breaks as would appear on an envelope.
%
% Also, if it is not obvious, please add a comment with % that says
% what part is a district within a city, what part is a city, 
% what part is a region or province and what part is a postal code.
%
% We recommend using the most current address possible.  
% If you want the journal copy sent to a different address than
% the one below, please let Dan Christensen <jdc@uwo.ca> know.
\address[1]{Mathematics Department,
         KTH,
         Lindstedtsvägen 25,
         Stockholm, 11428,
         Sweden}

% If needed, use a \thanks command, but not inside the author command.
% \thanks{The first author was supported in part by a grant.}

% Second Author.
\author[2]{Alessandro De Gregorio}
% Second Author's email address.  Must come before \address.
\ead{alessandro.degregorio@polito.it}
% Second author's postal address.  Please read the instructions
% given above.  This one gives an example of an American address.
\address[2]{Mathematics Department,
         Politecnico di Torino,
         Corso Duca degli Abruzzi 24,
         Torino, 10129, 
         Italy}
% Additional authors done in the same way.
\author[3]{Nicola Quercioli \corref{cor1}}
\ead{nicola.quercioli2@unibo.it}
\address[3]{Mathematics Department,
         University of Bologna,
          Piazza di Porta S. Donato 5,
         Bologna, 40126, 
         Italy}

\author[1]{Francesca Tombari}
\ead{tombari@kth.se}
% If the author names are too long for the running head, use
% the following command to specify a shorter version:
%\shortauthors{CHACH\'OLSKI, DE GREGORIO, QUERCIOLI \andname\ TOMBARI}

% AMS 2010 Mathematics Subject Classification.  List one or several,
% separated by commas, ending in a period.
% See http://www.ams.org/mathscinet/msc/msc2010.html for the 2010
% numbering system.
%\classification{ 55Nxx, 18Dxx.}
% Use \classification[2000]{12X34, 55X78.} if you must use codes
% from the 2000 numbering system.

\cortext[cor1]{Corresponding author.}

% Keywords of the article, usually singular, no leading caps.  
\begin{keyword}
persistent homology\sep topological data analysis\sep equivariant operators\sep  \MSC[2020]{55N31, 62R40}%\MSC{ 55Nxx, 18Dxx.}
\end{keyword}

% Abstract comes before maketitle
\begin{abstract}
% Abstract text, usually no more than 200 words.
% Avoid bibliographic references (\cite) and complicated mathematics.
% Please do not use custom macros here, as this abstract has to 
% be able to stand alone.  You may use standard tex/latex/AMS macros.
The aim of this article is to describe a new perspective on functoriality of  persistent homology and explain its intrinsic symmetry that is often overlooked. 
A data set  for us is a finite collection of functions,  called measurements, with a finite domain.
Such  a data set might contain internal symmetries which are effectively  captured by the action of a set of the domain endomorphisms. 
Different choices of the set of endomorphisms encode different symmetries of the data set. 
We describe various category structures on such enriched data sets and prove some of their properties such as decompositions and morphism formations.   We also describe a data structure, based on coloured directed graphs, which is convenient to encode the mentioned enrichment. 
We show that persistent homology preserves only some aspects of these landscapes of enriched data sets however not all. In other words persistent homology is not a functor on the entire  category  of enriched data sets.  Nevertheless we show that 
 persistent homology is functorial   locally. We use the concept of equivariant operators to capture some of  the information missed by persistent homology.
\end{abstract}

% Leave these items like this, and the journal will fill them in.
%\received{Month Day, Year}   % receive date (for example: October 11, 1999)
%\revised{Month Day, Year}    % date of revision; omit, if no revision;
%                             % if multiple revisions, separate by commas
%\published{Month Day, Year}  % publish date
%\submitted{Bill Murray}      % Name of Journal's Editor, who handled Article 
%\volumeyear{2014} % Volume Year
%\volumenumber{16} % Volume Number 
%\issuenumber{2}   % Issue Number
%\startpage{1}     % PageNumber of first page
%\articlenumber{1} % Sequence number of article within issue
%% If copyright is retained by author, comment this out:
%\owner{International Press}

\maketitle

% Text of Document.  Use constructs such as \section, \subsection,
% \begin{theorem} ... \end{theorem}, \begin{proof} ... \end{proof}, etc.

%%%%%%%%%%% INTRODUCTION
\section{Introduction}
In this article we give an answer to the question: what is persistent homology a functor of?

We will consider data sets given by finite sets of functions on a finite set $X$  with real values.
There are several important consequences of data sets having this form. For example,  they endow
$X$ with a pseudometric,  enabling  us to extract non-trivial homological information in form of 
persistent homology, one of the key invariants studied in Topological Data Analysis.
A single measurement does not contain any higher non-trivial homological information.
Sets of measurements however do.  Thus it is essential that measurements, on a given set $X$, are grouped together to form various data sets.
 In this case persistent homology becomes a non-expansive (1-Lipschitz) function $PH_d^{\Phi}\colon \Phi\to \text{Tame}( [0,\infty)\times  {\mathbf R},\text{Vect})$,
 assigning to each measurement in the data set $\Phi$ a tame vector space parametrized by $[0,\infty)\times  {\mathbf R}$.
 It is important to notice that the choice of a set of measurements on $X$ affects the pseudometric defined on it. One can use this fact to change the metric on $X$ in order to extract more meaningful information from persistent homology. For example consider  $X$  to be a finite sample of points on a circle. If  $\Phi$ consists of  only one function given by the  $x-$coordinate, then 
 the persistent homology of this measurement is trivial in degrees greater than $0$. 
 If we enlarge the data set by adding to the $x-$coordinate  the function  given by precomposing $x$ with rotation  by $90$ degrees,
 then the persistent homology of the function $x$ with respect to this bigger data set gains a non-trivial homology  in degree $1$.
 This illustrates how   our  knowledge of an object is affected by the number and the type of measurements done on it. 
 Furthermore in this example we gain additional  information by 
 enlarging the set of measurements through the  action of some of  the endomorphisms of $X$  on the existing measurements. We can then take advantage of these actions to inject geometrical features of our choice on a given data set.
 For exhibiting and extracting interesting homological features of data sets,  such actions are therefore important. 
  
 A data set $\Phi$ is naturally equipped with an action of the monoid of its operations  $\text{End}_{\Phi}(X)$, which are endomorphisms of $X$ preserving $\Phi$. This action gives the set $\Phi$ a structure of Grothendieck graph. Persistent homology turns out to be a
  functor indexed by  this  graph, rather than simply a function. 
  Thus not only   persistent homology can be assigned  to individual measurements  in a data set,
  but 
   operations can be used to compare   persistent homologies of different measurements. 
  That is what we call local functorial properties of  persistent homology. 
  
  Persistent homology also has certain  global functorial properties. 
  There are various ways of representing data in the form of sets of measurements, we might choose different units or  different parametrizations of a domain of measurements, or we might need to focus  only on certain operations such as rotations. % and ignore others such as translations. 
  Furthermore, the same measurements  might be part of different data sets.  These are  some of the reasons why it is essential to 
  be able to compare data sets  equipped with  different structures. For that purpose we introduce the notion of  incarnations of data sets to
  encode different actions, and SEOs to compare  incarnations. 
An incarnation of   a data set $\Phi$ is  an action of a subset $M\subset \text{End}_{\Phi}(X)$.  
  A SEO (set equivariant operator) between  two incarnations $(\Phi,M)$ and $(\Psi,N)$ is a pair consisting of 
  a map $T:M\to N$ and an equivariant (with respect to $T$) function  $\alpha: \Phi \to \Psi$.
  %an operator $\alpha: \Phi \to \Psi$ and a map $T:M\to N$, where $M,N\subset End_{\Phi}(X)$ are the sets acting on the two data %sets. We can say that SEOs are defined in such a way that preserves the action chosen on the data sets.  %The composition of  SEOs gives a category structure on the collection of all data set incarnations.  
The use of this kind of operators for the comparison of incarnations of data sets has been inspired by~\cite{DBLP:journals/corr/abs-1812-11832,zdfhgsfgjhefjeyghj} , where GENEOs (group equivariant non-expansive operators)  are introduced and used for applications to neural networks. 
  If a SEO is geometric, then there is a comparison map between persistent homologies of the incarnations connected by the SEO. However if a SEO is not geometric, such as the change of units SEO, there is no direct comparison of persistent homologies of the involved incarnations.
 Such SEOs   therefore exhibit  diverse homological features of data sets enhancing the analysis. 
 This suggests  complementarity of these operators and persistent homology for a geometric analysis of a data set.
Consider the change of unit as an example. In general it is the SEO obtained by composing measurements in a data set by a given real valued function defined on the real numbers. Multiplication by $-1$ is an example of such a SEO. It has the effect of turning the sub-level sets persistent homology of a measurement into its super-level sets persistent homology, leading in general to a completely different information about the data set. The outcome consists of two different points of view on the same object, that are not functorially comparable, but together may enhance the accuracy of the analysis of the object of interest.

%%%%%%%%%%% datasets
\section{Data sets}\label{asfdsfghfw}
For us a data set, which we regard as a point in the data landscape, 
is   given by a finite  set of real valued functions on some finite set $X$ also called measurements:
 \[\Phi=\{\phi_i\colon X\to\mathbf{R}\ |\ i=1,\cdots,m\}.\]
We define $\text{dom}(\Phi)$, the \textbf{domain} of  dataset $\Phi$, to be  the set $X$  which is the domain of the functions in $\Phi$. 
%Notice that such a set is well defined, since all the functions in $\Phi$ have the same domain.
 The most fundamental  aspect  of  a data set  $\Phi$  is that it is a set. 
All such  data sets with different domains    form a category with  functions as morphisms. This is the most primitive landscape of  data sets.  The nature of our data sets however can be used to  impose more intricate structures and more meaningful landscapes.
This is reminiscent of the case  of groups. The most fundamental  aspect  of  a group is that it is a set. However the category whose morphisms are group homomorphism is a much more meaningful landscape in which to study relationships between groups.
 To understand relationships between topological groups, the category 
with continuous group homomorphisms  provides an even  
more meaningful landscape.

In this most primitive landscape however   we can already perform products and coproducts.
Let $\phi\colon X\to \mathbf{R}$ and $\psi\colon Y\to \mathbf{R}$ be functions. Define
 $\phi+\psi\colon X\coprod Y\to  \mathbf{R}$ to be the function that maps
 $x$  in $X$ to $\phi(x)$ and $y$ in $Y$ to $\psi(y)$.
 The  \textbf{coproduct} of two data sets  $\Phi$ and $\Psi$, denoted by $\Phi\coprod\Psi$, is defined to be the data set given by the  measurements
$\{\phi+0\ |\  \phi\in  \Phi\}\cup \{0+\psi\ |\  \psi\in  \Psi\}$ on $X\coprod Y$.  Their  \textbf{product}, denoted by $\Phi\times\Psi$, is defined to be the data set given by the measurements $\{\phi+\psi\ |\  \phi\in  \Phi\text{ and } \psi\in\Psi\}$ on $X\coprod Y$.
The functions:
\[\begin{tikzcd}[column sep=40, row sep=10]
&\Phi\ar[bend left=10]{dr}[sloped,below]{\text{in}_{\Phi}}[sloped,above]{\phi\mapsto\phi+0} &\\
\Phi\times\Psi\ar[bend left=10]{ur}[sloped,below]{\text{pr}_{\Phi}}[sloped,above]{\phi+\psi\mapsto\phi} 
\ar[bend right=10]{dr}[sloped,above]{\text{pr}_{\Psi}}[sloped,below]{\phi+\psi\mapsto\psi}
&& \Phi\coprod \Psi\\
&\Psi\ar[bend right=10]{ur}[sloped,above]{\text{in}_{\Psi}}[sloped,below]{\psi\mapsto 0+\psi} &
\end{tikzcd}\]
satisfy the following universal properties, which justify the names coproduct and  product:
\begin{itemize}
\item for any data set $\Pi$, and any two functions $\alpha\colon \Phi\to \Pi$ and  $\beta\colon \Psi\to \Pi$, there is a unique function
$\mu\colon  \Phi\coprod\Psi\to \Pi$ for which $\mu\, \text{in}_{\Phi}=\alpha$ and $\mu \, \text{in}_{\Psi}=\beta$;
\item for any data set $\Pi$, and any two functions $\alpha\colon \Pi\to \Phi$ and  $\beta\colon \Pi\to\Psi$, there is a unique function
$\mu\colon  \Pi\to \Phi\times\Psi$ for which $\text{pr}_{\Phi}\mu=\alpha$ and $\text{pr}_{\Psi}\mu=\beta$.
\end{itemize}

%In this primitive landscape, any data set $\Phi$ is isomorphic to $\coprod_{\phi\in\Phi} \{\phi\}$. 

Let $f\colon\mathbf{R}\to  \mathbf{R}$ be a function. By composing with $f$,  a data set 
$\Phi$ is transformed into a new data set
$f\Phi:=\{f\phi\ |\ \phi\in \Phi\}$. This operation is called   \textbf{change of units} along $f$.  The  symbol $f{-}\colon \Phi\to f\Phi$  denotes the function mapping $\phi$ to $f\phi$.
For example let $f\colon\mathbf{R}\to\mathbf{R}$
 map $\{r\in\mathbf{R}\ |\ r<0\}$ to $-1$ and  $\{r\in\mathbf{R}\ |\ r\geq 0\}$ to $1$. Consider $X=\{x_1,x_2\}$, two data sets $\{1,2\}$ and $\{-1,1\}$ given by the constant functions $-1,1,2\colon X\to \mathbf{R}$, and a function $\alpha\colon\{1,2\}\to \{-1,1\}$ mapping  $1$ to $-1$ and $2$ to $1$.  Then  $f\{1,2\}=\{1\}$ and   $f-\colon \{-1,1\}\to f\{-1,1\}$ is the identity. Thus there is no function $f\{1,2\}\to f\{-1,1\}$ making  the following diagram commutative:
 \[\begin{tikzcd}[column sep=30, row sep=15]
\{1,2\}\ar{r}{f-}\ar{d}[swap]{\alpha}& f\{1,2\}=\{1\}\ar{d}\\
 \{-1,1\}\ar{r}{f-=\text{id}} & f\{-1,1\}=\{-1,1\}
 \end{tikzcd}\]
% Consequently,  for that $f$ there is no functor assigning to a data set $\Phi$ its change of units $f\Phi$ along $f$   for which $f-\colon\Phi\to f\Phi$ is a natural transformation over the identity functor.
 Consequently,  for that $f$ there is no functor $F$ assigning to a data set $\Phi$ its change of units $f\Phi$ along $f$   for which $f-\colon\Phi\to f\Phi$ is a natural transformation between $F$ and the identity functor.
 If $f$ is invertible,  then  $f{-}\colon \Phi\to f\Phi$ is a bijection whose inverse is given by $f^{-1}-$.  
The association $(\alpha\colon \Phi\to\Psi)\mapsto \left((f-)\alpha (f^{-1}-)\colon f\Phi\to f\Psi\right)$ is  a functor
 for which $f-\colon \Phi\to f \Phi$ is a natural transformation between this functor and the identity functor.
  Changing the units along  any function  preserves products and coproducts i.e.,
$f(\Phi\coprod \Psi)$ is isomorphic to $f(\Phi)\coprod f(\Psi)$, and $f(\Phi\times \Psi)$ is isomorphic to $f(\Phi)\times f(\Psi)$.
A similar reasoning is used in \cite{giusti15}  to study brain data, in order to obtain results that are invariant under transformations given by change of units with invertible functions, and in \cite{degregorio20} to study metric spaces that are isometric up to a rescaling of the distance functions.

Let $\Phi$ be a data set with the domain $X$. By composing a function $f\colon Y\to X$  with the measurements in $\Phi$, we obtain a new data set $\Phi f:=\{\phi f\ |\ \phi\in \Phi\}$ with the domain $Y$. This operation is called 
 \textbf{domain change} along $f$. 
 The symbol ${-}f\colon \Phi\to \Phi f$ denotes the function that maps $\phi$ to $\phi f$.

Let $f_1\colon Z_1\to X$ and  $f_2\colon Z_2\to Y$ be functions and  $f_1\coprod f_2\colon Z_1\coprod Z_2\to X\coprod Y$
be their coproduct. 
For any  datasets $\Phi$ and $\Psi$ with $\text{dom}(\Phi)=X$ and $\text{dom}(\Psi)=Y$, the following equalities hold:
%Denote by $f_1\coprod f_2\colon Z_1\coprod Z_2\to X\coprod Y$ and $f_1\times f_2\colon Z_1\times Z_2\to X\times Y$ the functions
%where for $z_1$ in $Z_1$ and $z_2$ in $Z_2$:
%\begin{equation*}
 %(f_1\coprod f_2) (z_1)=f_1(z_1),\ \ \ (f_1\coprod f_2) (z_2)=f_2(z_2),\ \ \ (f_1\times f_2)(z_1,z_2) = (f_1(z_1), f_2(z_2)).
%\end{equation*}
%\begin{equation*}
%    \begin{split}
 %       (f_1\coprod f_2) (z_1)=f_1(z_1)\quad \text{if }z_1\in Z_1\\
 %       (f_1\coprod f_2) (z_2)=f_2(z_2)\quad \text{if } z_2\in Z_2\\
 %       (f_1\times f_2)(z_1,z_2) = (f_1(z_1), f_2(z_2)).
 %   \end{split}
%\end{equation*}
\[(\Phi\coprod\Psi)(f_1\coprod f_2) = \Phi f_1\coprod\Psi f_2,\ \ \ \ \ \ \ \ (\Phi\times\Psi)(f_1\coprod f_2) = \Phi f_1\times\Psi f_2.\]
%  It preserves products and coproducts. \textcolor{olive}{Question: What do we mean with it preserves product? Is it that $(\Phi\times\Psi)f=\Phi f \times \Psi f$? I don't think so because on the left hand side $f$ should be a function with values in $\textrm{Domain}(\Phi)\coprod\textrm{Domain}(\Psi)$.}
 
 %The transformation that changes the domain along any function  preserves products and coproducts.
%%%%% metrics and persistent homology
\section{Metrics and persistent homology}\label{aSFDFGH} 
We can think about a data set $\Phi$ as a subset $\Phi\subset  {\mathbf R}^{|X|}$. Via this inclusion
$\Phi$ inherits a metric induced by the infinity   norm $\| v\|_\infty=\text{max}\{|v_i|\}$  on ${\mathbf R}^{|X|}$. We use the symbol $\|\phi-\psi\|_\infty$
to denote the distance  between $\phi$ and $\psi$ in $\Phi$. The considered data sets are not just sets anymore but metric spaces.  
Therefore non-expansive
($1$-Lipschitz) functions between data sets play a special role. For example, let $f\colon\mathbf{R}\to  \mathbf{R}$ be a function. If $f$ is non-expansive, then so is  the change of units along $f$, 
$f-\colon \Phi\to f\Phi$, that maps $\phi$ to $f\phi$. The domain change $-h\colon\Phi\to \Phi h$ 
is non-expansive along any $h$. Non-expansiveness is an important assumption to prove some stability results in~\cite{DBLP:journals/corr/abs-1812-11832} and it is also reasonable in applications, since it is important that these functions between data sets do not alter the information too much.

By taking all the measurements of $\Phi$ together, we can form  a function
$[\phi_1 \cdots  \phi_m]\colon X\to   {\mathbf R}^{m}$.
Via this function, $X$ inherits a pseudometric $ d_{\Phi}$ induced by the  infinity   norm on ${\mathbf R}^{m}$. Explicitly
$d_{\Phi}(x,y):=\text{max}_{1\leq i\leq m}|\phi_i(x)-\phi_i(y)|$. 
This metric plays a  fundamental role as it permits us 
 to extract persistent homologies (see~\cite{MR2506738,MR2405684}).
In this article,   \textbf{persistent homology} of a data set  $\Phi$ with coefficients in a field  and in a given degree  $d$ assigns a vector space 
$PH_d^{\Phi}(\phi)_{r,s}$ to each measurement $\phi$ in  $\Phi$,  for every $(r,s)$ in $[0,\infty)\times  {\mathbf R}$, and it is defined as:
\[PH_d^{\Phi}(\phi)_{r,s}:=H_d\left( \text{VR}_r (  \phi\leq s,d_{\Phi}   )  \right)\text{, where:}\]
\begin{itemize}
\item $\phi\leq s:=\phi^{-1}(-\infty,s]$;
\item $\text{VR}_r\left(\phi\leq s,d_{\Phi}\right)$ is the \textbf{Vietoris-Rips} complex whose simplices are given by the subsets  $\sigma\subset (\phi\leq s)$ of  diameter not exceeding $r$ with respect to $d_{\Phi}$;
\item $H_d$ is the homology in degree $d$ with coefficients in a given field.
\end{itemize}

If $s\leq s'$ and $r\leq r'$, then $(\phi\leq s)\subset (\phi\leq s')$ and   therefore $\text{VR}_r(\phi\leq s)\subset \text{VR}_{r'}(\phi\leq s')$.  The   linear function  induced  on homology by this inclusion  is denoted by:
\[PH_d^{\Phi}(\phi)_{(r,s)\leq(r',s')}\colon PH_d^{\Phi}(\phi)_{r,s}\to PH_d^{\Phi}(\phi)_{r',s'}.\]
These functions form a functor $PH_d^{\Phi}(\phi)$ indexed by the poset $[0,\infty)\times {\mathbf R}$ with values in the category of vector spaces. Since $X$ is finite,    $PH_d^{\Phi}(\phi)$ is \textbf{tame} (see~\cite{MR3735858}). This means that   values of $PH_d^{\Phi}(\phi)$  are finite dimensional, and there are two finite  sequences $0=r_0<r_1<\cdots <r_m$ in $[0,\infty)$  and $s_0<s_1<\cdots <s_l=\infty$ in  ${\mathbf R}$  such that
 $PH_d^{\Phi}(\phi)$, restricted to  subposets of the form $ [r_i,r_{i+1})
\times (\infty,s_0)\subset  [0,\infty)\times {\mathbf R}$ and $[r_i,r_{i+1})\times [s_j,s_{j+1})\subset [0,\infty)\times {\mathbf R}$, is constant.
 The category of such functors is denoted by  $\text{Tame}( [0,\infty)\times  {\mathbf R},\text{Vect})$.
 Thus a data set $\Phi$ leads to a function  assigning to each measurement  $\phi$  its persistent homology in a given degree:
 \[PH_d^{\Phi}\colon \Phi\to\text{Tame}( [0,\infty)\times  {\mathbf R},\text{Vect}).\]
 
 Next,   recall a  definition of the \textbf{interleaving metric} in the direction of the vector $(0,1)$
 on  $\text{Tame}( [0,\infty)\times  {\mathbf R},\text{Vect}) $ (see~\cite{MR3348168}).  Let $P$ and $Q$ be in  $\text{Tame}( [0,\infty)\times  {\mathbf R},\text{Vect}) $.
%There are various pseudometrics considered on  $\text{Tame}( [0,\infty)\times  {\mathbf R},\text{Vect}) $, however in this article we focus on the interleaving metric in the direction of the vector $(0,1)$~\cite{}.  We now recall its definition.
 %Let $P$ and $Q$ be in  $\text{Tame}( [0,\infty)\times  {\mathbf R},\text{Vect}) $.
 \begin{itemize}
 \item 
$P$ and $Q$  are $\epsilon$-\textbf{interleaved} if, for all    $(r,s)$ in  $[0,\infty)\times {\mathbf R}$, there are linear functions $f_{s,r}\colon P_{r,s}\to Q_{r,s+\epsilon}$ and $g_{s,r}\colon  Q_{r,s}\to P_{r,s+\epsilon}$ making the following
 diagram commutative:
 \[\begin{tikzcd}
 & P_{r,s}\ar{dr}{f_{s,r}}\ar{rr}{P_{(r,s)<(r,s+2\epsilon)}}  & & P_{r,s+2\epsilon}\ar{rd}{f_{r,s+2\epsilon}}\\
 Q_{r,s-\epsilon}\ar{ur}{g_{r,s-\epsilon}}\ar{rr}[swap]{Q_{(r,s-\epsilon)<(r,s+\epsilon)}} & &  Q_{r,s+\epsilon} \ar{rr}[swap]{Q_{(r,s+\epsilon)<(r,s+3\epsilon)}}
 \ar{ur}{g_{r,s+\epsilon}}& &  Q_{r,s+3\epsilon}
 \end{tikzcd}\]
 \item  $d_{\bowtie}(P,Q):=\text{inf}\{\epsilon\in [0,\infty)\ |\ \text{$P$ and $Q$ are $\epsilon$-interleaved}\}$.
 \end{itemize}
 The function $P,Q\mapsto d_{\bowtie}(P,Q)$ is an extended ($\infty$ is allowed)  metric on the set $\text{Tame}( [0,\infty)\times  {\mathbf R},\text{Vect}) $
 called   interleaving metric in the direction of the vector $(0,1)$.
 \begin{proposition}
The  function $PH_d^{\Phi}\colon \Phi\to\text{\rm Tame}( [0,\infty)\times  {\mathbf R},\text{\rm Vect})$
 is non-expansive if the set $ \Phi$ is equipped with  $\infty$-norm metric $\|\phi-\psi\|_\infty$ and the set 
 $\text{\rm Tame}( [0,\infty)\times  {\mathbf R},\text{\rm Vect})$ is equipped with the interleaving metric in the direction of the vector $(0,1)$.
 \end{proposition}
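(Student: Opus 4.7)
The plan is to show that for any two measurements $\phi,\psi\in\Phi$ with $\epsilon:=\|\phi-\psi\|_\infty$, the tame functors $PH_d^{\Phi}(\phi)$ and $PH_d^{\Phi}(\psi)$ are $\epsilon'$-interleaved for every $\epsilon'>\epsilon$ (equivalently, $\epsilon$-interleaved when the inequalities are non-strict), which gives $d_{\bowtie}(PH_d^{\Phi}(\phi),PH_d^{\Phi}(\psi))\leq\|\phi-\psi\|_\infty$.

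The key pointwise observation is that $|\phi(x)-\psi(x)|\leq\epsilon$ for every $x\in X$, so $\phi(x)\leq s$ forces $\psi(x)\leq s+\epsilon$; hence $(\phi\leq s)\subset(\psi\leq s+\epsilon)$ as subsets of $X$. Crucially, the pseudometric $d_{\Phi}$ used to define the Vietoris--Rips complexes depends on the whole data set $\Phi$, not on the particular measurement, so the \emph{same} metric $d_{\Phi}$ controls diameters in both complexes. Consequently, for every $r\geq 0$ and $s\in\mathbf{R}$, the above set inclusion lifts to a simplicial inclusion $\text{VR}_r(\phi\leq s,d_{\Phi})\subset\text{VR}_r(\psi\leq s+\epsilon,d_{\Phi})$. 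Applying $H_d$ yields a linear map
\[f_{r,s}\colon PH_d^{\Phi}(\phi)_{r,s}\longrightarrow PH_d^{\Phi}(\psi)_{r,s+\epsilon}.\]
Symmetrically, the inclusion $(\psi\leq s)\subset(\phi\leq s+\epsilon)$ induces $g_{r,s}\colon PH_d^{\Phi}(\psi)_{r,s}\to PH_d^{\Phi}(\phi)_{r,s+\epsilon}$.

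The commutativity required in the interleaving diagram then follows formally, because every arrow involved—both the structure maps $PH_d^{\Phi}(-)_{(r,s)\leq(r,s+2\epsilon)}$ and the interleaving maps $f_{r,s},g_{r,s}$—is induced by an inclusion of subcomplexes of $\text{VR}_r(X,d_{\Phi})$. Two composites with the same source and target are induced by the same inclusion of subcomplexes (both triangles collapse to the inclusion $\text{VR}_r(\phi\leq s)\subset\text{VR}_r(\phi\leq s+2\epsilon)$ respectively $\text{VR}_r(\psi\leq s-\epsilon)\subset\text{VR}_r(\psi\leq s+\epsilon)\subset\text{VR}_r(\psi\leq s+3\epsilon)$), so their images under $H_d$ coincide. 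This establishes the $\epsilon$-interleaving; taking infimum over admissible $\epsilon$ completes the proof.

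No serious obstacle arises: the entire argument reduces to bookkeeping with inclusions of sublevel sets, and the \emph{essential point}—easy to miss—is that the same pseudometric $d_{\Phi}$ is used for both $\phi$ and $\psi$, so the only shift between the two filtrations happens in the $s$-direction, which is precisely the direction $(0,1)$ in which interleaving is measured.
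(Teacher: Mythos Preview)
Your proof is correct and follows essentially the same approach as the paper's own proof: both establish the sublevel-set inclusions $(\phi\leq s)\subset(\psi\leq s+\epsilon)$ and $(\psi\leq s)\subset(\phi\leq s+\epsilon)$ for $\epsilon=\|\phi-\psi\|_\infty$, lift them to inclusions of Vietoris--Rips complexes using the common pseudometric $d_\Phi$, and apply $H_d$ to obtain the interleaving maps. Your version is somewhat more explicit than the paper's about why the interleaving diagram commutes (functoriality of homology applied to inclusions) and about the role of the fixed pseudometric $d_\Phi$, but the underlying argument is the same.
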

 \begin{proof}
 Let $\phi,\psi\colon X\to \mathbf{R}$ be measurements in $\Phi$ and $\epsilon=\|\phi-\psi\|_\infty$. For every  $s$ in $\mathbf{R}$, the sublevel set $\phi\leq s$
 is a subset of $\psi\leq s+\epsilon$, and  $\psi\leq s$ is a subset of $\phi\leq s+\epsilon$. This translates into
 inclusions: \[ \text{VR}_r (  \phi\leq s,d_{\Phi}   )\subset  \text{VR}_r (  \psi\leq s+\epsilon,d_{\Phi}   )\ \ \ \ \  \text{VR}_r (  \psi\leq s,d_{\Phi}   )\subset  \text{VR}_r (  \phi\leq s+\epsilon,d_{\Phi}   )\] leading
 functions:
 \[f_{s,r}\colon PH_d^{\Phi}(\phi)_{r,s}\to PH_d^{\Phi}(\psi)_{r,s+\epsilon}\ \ \ \ \ g_{s,r}\colon PH_d^{\Phi}(\psi)_{r,s}\to PH_d^{\Phi}(\phi)_{r,s+\epsilon}.\]  These functions provide $\epsilon$ interleaving between $PH_d^{\Phi}(\phi)$ and $PH_d^{\Phi}(\psi)$, giving   $\|\phi-\psi\|_\infty\geq d_{\bowtie}(PH_d^{\Phi}(\phi),PH_d^{\Phi}(\psi))$.
  \end{proof}
 
 A measurement $\phi\colon X\to \mathbf{R}$ can be part of many data sets and its persistent homology  depends on what data set this function
 is part of.  For example,  let $X=\{x_1,x_2,x_3,x_4\}$ and $\phi,\psi\colon X\to \mathbf{R}$  be measurements defined as follows:
 \[\begin{array}{ccc}
  \phi(x_1)=-1 &  \phi(x_2)=\phi(x_3)=0 & \phi(x_4)=1  \\
  \hline
   \psi(x_3)=-1 &  \psi(x_1)= \psi(x_4)=0 &  \psi(x_2)=1
   \end{array}\]
   %   \[\begin{array}{cc}
% \phi(x_1)=-1 & \psi(x_1)= \psi(x_4)=0\\
% \phi(x_2)=\phi(x_3)=0 & \psi(x_2)=1\\
% \phi(x_4)=1  & \psi(x_3)=-1
% \end{array}\]
The measurement $\phi$ is   part of two data sets $\Phi=\{\phi\}$ and $\Psi=\{\phi,\psi\}$.
The induced  pseudometrics $d_{\Phi}$ and  $d_{\Psi}$ on $X$ can be depicted by the following diagrams where
the continuous,  dashed, and dotted lines indicate distance $0$, $1$ and $2$ respectively:
%the  continuous line indicate distance $0$, the dashed lines indicate distance $1$, and  the dotted lines indicate distance $2$:
\[\begin{array}{rl|rl}
d_\Phi & \begin{tikzcd}
x_1\ar[dash, dashed]{r} \ar[dash, dashed]{d}\ar[dash,dotted]{dr} & x_2 \ar[dash, dashed]{d} \ar[dash]{dl} \\
x_3 \ar[dash, dashed]{r} & x_4
\end{tikzcd}\ \ \ 
&
\ \ \ d_\Psi
&
\begin{tikzcd}
x_1\ar[dash, dashed]{r} \ar[dash, dashed]{d}\ar[dash,dotted]{dr} & x_2 \ar[dash, dashed]{d} \ar[dash, dotted]{dl} \\
x_3 \ar[dash, dashed]{r} & x_4
\end{tikzcd}
\end{array}
\]
In this case $PH_1^{\Phi}(\phi)_{r,s}=0$ for all $r$ and $s$, however:
\[\text{dim}PH_1^{\Psi}(\phi)_{r,s}=\begin{cases}
1 &\text{ if } 1\leq s \text{ and } 1\leq r<2\\
0 &\text{ otherwise } 
\end{cases}\]

%\[\text{dim}PH_d^{\Phi}(\phi)_{r,s}=\begin{cases}
%0 &\text{ if } d>0\text{ or } s<-1\\
%1 & \text{ if } d=0 \text{ and either }  -1\leq s<0\text{ or } r\geq 1\\
%2 &  \text{ if } d=0 \text{ and  } 0\leq s<1\text{ and } 0\leq r<1\\
%3 &\text{ if } d=0 \text{ and  } 1 \leq s \text{ and }  0\leq r<1
%\end{cases}
%\]
 % \[\begin{tikzcd}
 %&  & x_1\ar[dash]{ddll} & x_2\ar[dash]{dl}  &  & \\
% & & x_3\ar[dash]{dl} & x_4 & & \\
 %-1 & 0 & 1 & -1 & 0 & 1
% \end{tikzcd}\]

 To understand persistent homology, it is therefore paramount to understand  how it changes when  data sets change and here  functoriality plays an essential role.

 Let $\Phi$ and  $\Psi$ be data sets  consisting of measurements on $X$ and  $Y$ respectively.
 A function $\alpha\colon \Phi\to \Psi$ is called \textbf{geometric} if there is a function $f\colon Y\to X$, called a \textbf{realization} of $\alpha$,  making  the following diagram commutative for every $\phi$ in $\Phi$:
  \[\begin{tikzcd}[row sep=3]
Y\ar{dd}[swap]{f}\ar{dr}[pos=.3]{\alpha(\phi)} \\
& \mathbf{R}\\
 X\ar{ru}[swap,pos=.3]{\phi}
 \end{tikzcd}\]
 For example $-f\colon \Phi\to \Phi f$ is geometric, as it is realized by $f$.
 
The commutativity of the   triangle  above has two consequences. First, $f$ is non-expansive with respect to the pseudometrics  $d_{\Phi}$ on  $X$ and $d_{\Psi}$ on $Y$. Second,   for  $s$ in $\mathbf{R}$ and  $\phi$ in $\Phi$, the subset  $(\alpha(\phi)\leq s)\subset Y$ is mapped via $f$ into
$(\phi\leq s)\subset X$, i.e., the following diagram commutes:
\[\begin{tikzcd}[row sep=3]
\alpha(\phi)\leq s\ar[hook]{r}\ar{dd}[swap]{f} & Y\ar{dd}[swap]{f}\ar{dr}[pos=.3]{\alpha(\phi)}\\
& & \mathbf{R}\\
\phi\leq s\ar[hook]{r} & X\ar{ur}[swap,pos=.3]{\phi}
\end{tikzcd}\]
The  realization $f$  induces therefore  a map of Vietoris-Rips complexes and their homologies:
\[f_{s,r}\colon \text{VR}_r(\alpha(\phi)\leq s,d_{\Psi} )\to \text{VR}_r(\phi\leq s,d_{\Phi} );
\]
\[\begin{tikzcd}[column sep=50, row sep =10]
PH_d^{\Psi}(\alpha(\phi))_{r,s}\ar[equal]{d} &PH_d^{\Phi}(\phi)_{r,s} \ar[equal]{d}\\
 H_d\left( \text{VR}_r (\alpha(\phi)\leq s ,d_{\Psi}   )  \right)
\ar{r}{H_d(f_{r,s})} &
H_d\left( \text{VR}_r (\phi\leq s ,d_{\Phi}   )  \right).
\end{tikzcd}
\]

  If $f,f'\colon Y\to X$ are two realizations of $\alpha$, then for  $y$ in $Y$,
  $d_{\Phi}(f(y),f'(y))=0$, hence they are points of the same simplex in the Vietoris-Rips complex, implying that $f_{r,s}$ and $f'_{r,s}$ are homotopic  for all $r$ and $s$. %We recall that two simplicial maps $f,g:K\to L$, between simplicial complexes $K$ and $L$, are directly homotopic if for all $\sigma$ simplex in $K$, $f(\sigma)\cup g(\sigma)$ is a simplex in $L$. 
  Consequently, $H_d(f_{r,s})=H_d(f'_{r,s})$. The linear function
  $H_d(f_{r,s})$ depends therefore only on  $\alpha$ and it is independent on the choice of its realization $f$.
 We  denote this function by:
 \[PH_d^{\alpha}(\phi)_{r,s}\colon PH_d^{\Psi}(\alpha(\phi))_{r,s}\to     PH_d^{\Phi}(\phi)_{r,s}.\]
 These functions are  natural in $r$ and $s$  and induce  a morphism in the category $\text{Tame}( [0,\infty)\times  {\mathbf R},\text{Vect})$
 between persistent homologies:
 \[PH_d^{\alpha}(\phi)\colon PH_d^{\Psi}(\alpha(\phi))\to  PH_d^{\Phi}(\phi).\]

 If  $\alpha\colon\Phi\to\Psi$ and $\beta\colon\Psi\to \Xi$ are geometric functions realized by  $f\colon Y\to X$ and $g\colon Z\to Y$,
 then the composition $\beta\alpha \colon \Phi\to  \Xi$ is also geometric, and realized by the composition $fg\colon Z\to X$.  Consequently, for every 
 measurement $\phi$ in $\Phi$,
 $PH_d^{\beta\alpha}(\phi)=PH_d^{\alpha}(\phi)PH_d^{\beta}(\alpha(\phi))$, that assures the commutativity of the diagram:
 \[\begin{tikzcd}[column sep=40]
 PH_d^{\Xi}(\beta\alpha(\phi)) \ar{r}{PH_d^{\beta}(\alpha(\phi))}\ar[bend right=15]{rr}[swap]{PH_d^{\beta\alpha}(\phi)}
  &PH_d^{\Psi}(\alpha(\phi))\ar{r}{PH_d^{\alpha}(\phi)} &  PH_d^{\Phi}(\phi)
 \end{tikzcd}\]

For any $\alpha\colon \Phi\to \Psi$,  taking persistent homology leads to two functions on $\Phi$:
\[\begin{tikzcd}[row sep=4, column sep=28]
 &  &\text{Tame}( [0,\infty)\times  {\mathbf R},\text{Vect})\\
 \Phi\ar[bend left=11,end anchor=west]{urr}{PH_d^\Phi} \ar[bend right=15,end anchor=west]{rd}{\alpha}\\
  & \Psi\ar{r}{PH_d^\Psi} & \text{Tame}( [0,\infty)\times  {\mathbf R},\text{Vect})
 \end{tikzcd}\]
These functions  rarely  coincide. 
 However, when  $\alpha$ is geometric,  we can use the morphisms $PH_d^{\alpha}(\phi)\colon PH_d^{\Psi}(\alpha(\phi))\to  PH_d^{\Phi}(\phi)$  to compare the values of these two functions on  $\Phi$. For non-geometric $\alpha$, we are not equipped with such comparison morphisms and there is no reason for such a comparison to even exist. For example,
 consider the change of unit along the  function $f\colon\mathbf{R}\to \mathbf{R}$, $ f(x):=-x$. 
 Then $f-\colon \Phi\to f\Phi$ is an isomorphism. 
 In this case 
 \[\begin{array}{l|r}
 PH_d^{\Phi}(\phi)_{r,s}:=H_d\left( \text{VR}_r (  \phi\leq s,d_{\Phi}   )  \right)  & 
 (f-)PH_d^{f\Phi}(\phi)=H_d\left( \text{VR}_r (  \phi\geq -s,d_{\Phi}   )  \right).
 \end{array}\]
 Thus $PH_d^{\Phi}$ encodes information about sub-level sets of the measurements in $\Phi$ and 
$ (f-)PH_d^{f\Phi}$ encodes information about super-level sets  of the measurements. These persistent  homologies encode
therefore 
the same information as the  so called extended persistence (see~\cite{MR2472288,MR3408277}).
%%% actions
 \section{Actions}\label{asfadfhg}
To describe symmetries of a data set $\Phi$ with domain $X$, we  consider operations on $X$ that convert    measurements into  measurements. By definition a $\Phi$-\textbf{operation} is a function $g\colon X\to X$ such that, for every measurement $\phi$ in $\Phi$,  the composition 
$\phi g$   also belongs to $\Phi$.  If $g\colon X\to X$ is such an operation, then, for all $\phi$ and $\psi$ in $\Phi$:
\[\|\phi-\psi\|_{\infty}=\text{max}_{x\in X}|\phi(x)-\psi(x)|\geq
	\text{max}_{x\in \text{im}(g)}|\phi(x)-\psi(x)|=\|\phi g-\psi g\|_{\infty}.\]
	Thus the function $-g\colon \Phi\to \Phi$ that maps $\phi$ to $\phi g$ is non-expansive.

The composition of $\Phi$-operations is again a $\Phi$-operation, and the identity function $\text{id}_X$ is also a $\Phi$-operation. In this way the set of $\Phi$-operations with the composition becomes a unitary monoid, called the \textbf{structure monoid} of $\Phi$,  and  denoted by: 
\[ \text{End}_{\Phi}(X)=\{g:X\to X\mid \phi  g\in \Phi  \text{ for every }\phi\in\Phi\}\subset \text{End}(X).\] 
%{\color{red}
%\begin{remark}\label{jgurghsuigh}
%	We note that this action is via non-expansive functions: for every 
%	$\Phi$-operation $g$ in $\text{map}_\Phi(X,X)$, the function $g\colon\Phi\to\Phi$, $\phi\mapsto \phi g$,
%	is $1$-Lipschitz. This is because:
%	\[\|\phi_1-\phi_2\|_{\infty}=\text{max}_{x\in X}|\phi_1(x)-\phi_2(x)|\geq
%	\text{max}_{x\in \text{im}(g)}|\phi_1(x)-\phi_2(x)|=\|\phi_1g-\phi_2g\|_{\infty}.\]
%\end{remark}}
A $\Phi$-operation $g$ is invertible if there is a $\Phi$-operation $h$ such that $gh=hg=\text{id}_X$. Since $\Phi$ is finite,  a $\Phi$-operation is invertible   if and only if it  is a bijection.  Their collection is denoted by:
%   We use the symbol $\text{Aut}_{\Phi}(X)$ to denote the 
%set of invertible  $\Phi$-operations. Explicitly:
\[ \text{Aut}_{\Phi}(X)=\{g:X\to X\mid  \text{ $g$ is a bijection, and } \phi  g\in \Phi  \text{ for every }\phi\in\Phi\}.\]
With the composition operation, $\text{Aut}_{\Phi}(X)$  becomes a group for which  the inclusion
$\text{Aut}_{\Phi}(X)\subset \text{End}_{\Phi}(X)$ is a monoid homomorphism. 

A data set $\Phi$ is equipped with  an associative  right action:
\[\Phi\times \text{End}_{\Phi}(X)\to \Phi,\ \ \ \ (\phi,g)\mapsto \phi g.\]
Thus $\Phi$ is not just a set, but a set with an action of the  monoid $\text{End}_{\Phi}(X)$.   To encode the symmetries of $\Phi$ 
induced by this action, we  consider its  incarnations.

An \textbf{incarnation} of   $\Phi$ is a choice of  a subset $M\subset \text{End}_\Phi(X)$ (in general, not necessarily a submonoid).  An incarnation is denoted as a pair $(\Phi,M)$.  We think about $M$ as an additional structure on $\Phi$. An incarnation of the form $(\Phi,M)$  is called an $M$-incarnation. We also refer to an $M$-incarnation as an $M$-action. The choice of an $M$-action on $\Phi$ encodes certain  symmetries of $\Phi$.
Different  choices of $M$ can encode different symmetries. This flexibility is important in applications. For example in data sets that represent images, we might want to focus on rotational symmetries, so we may use an appropriate action on the data set to inject the corresponding geometry.
% If $M$ is a  monoid (all the monoids  in this article are assumed to be unitary) and $I\colon M\to\text{End}_{\Phi}(X)$ is a monoid homomorphism, then $(\Phi,M,I)$ is called a \textbf{monoid incarnation}. 
The incarnation $(\Phi,\text{End}_{\Phi}(X))$ is an example of 
a incarnation called  universal.% For an incarnation $(\Phi,M,I)$, define $\langle I \rangle\subset\text{End}_{\Phi}(X) $
% to be the submonoid  generated by the image of $I$. This submonoid is also denoted by $\langle M \rangle $, if $I$ is clear from the context. Thus any  incarnation $(\Phi,M,I)$ of $\Phi$, leads to a monoid incarnation $\left(\Phi, \langle I \rangle\subset\text{End}_{\Phi}(X)\right) $
% of $\Phi$.

An incarnation  $(\Phi,M)$ is called a  \textbf{monoid incarnation} if $M\subset \text{End}_\Phi$  is a submonoid, and our convention here is that all such submonoids contain the identity element.  If  $(\Phi,M)$ is an incarnation, we use the symbol $(\Phi,\langle M\rangle)$ to denote the
monoid incarnation where $\langle M\rangle\subset \text{End}_\Phi(X)$ is the submonid generated by $M$.

If a submonoid  $M\subset \text{End}_\Phi(X)$ is a group, then $(\Phi,M)$ is called a  \textbf{group incarnation}.
The incarnation $(\Phi, \text{Aut}_{\Phi}(X))$ is an example of a group incarnation called   universal.

Let $(\Phi,M)$ be an incarnation for which any element $g$ in  $M$ is a bijection.
Such incarnations are called \textbf{group-like}.
For group like incarnations  $(\Phi,M)$ the  finiteness  implies that  the monoid $\langle M \rangle $ is in fact a subgroup of $\text{Aut}_{\Phi}(X)$. 
Thus  any group-like incarnation $(\Phi,M)$ leads to a group incarnation $\left(\Phi, \langle M \rangle\right) $.
 
 Let $(\Phi,M)$ be an incarnation.
 For a subset  $\Omega\subset \Phi$,  the symbol  $ \Omega M$ denotes the set of all the  measurements in $\Phi$
which either belong to  $\Omega$ or are  of the form $\omega g_1\cdots g_k$, for some $\omega$ in $\Omega$ and some sequence 
of elements $g_1,\ldots g_k$  in $M$.  If $\Omega M=\Phi$,  then $\Omega$ is said to \textbf{generate} the incarnation  $(\Phi,M)$.
 In the case  $(\Phi,M)$ is a monoid incarnation, then  any element in $ \Omega M$  is of the form  $\omega g$ for some
 $\omega$ in $\Omega$  and $g$ in $M$.   
 Note that $\Omega M= \Omega\langle M\rangle$ for every incarnation $(\Phi,M)$.

 %For a subset $M\subset \text{End}_\Phi(X)$, the symbol $\langle M\rangle$ denotes the submonoid of $\text{End}_\Phi(X)$ generated by $M$.
% Note that 
 %We can observe that if $M$ is just a set, and we denote with $\langle M\rangle$ the monoid by it generated in $\text{End}_\Phi(X)$, then $\Omega M$ coincides with $\Omega\langle M\rangle$. This is way we consider at least a monoid structure on $M$.
 
 If $\psi$ belongs to $\phi M:=\{\phi\}M$, then $\psi$ is said to be a \textbf{deformation} of   $\phi$. 
% % A measurement  $\psi$  is a deformation of $\phi$ in $(\Phi,M,I)$ if and only if  $\psi$  is a deformation of $\phi$ in $(\Phi,\langle M\rangle)$.
  If $(\Phi,M)$ is a group incarnation, then the relation of being a deformation is an equivalence relation. For a general incarnation however being a deformation can fail to be even a symmetric relation. 
  Two measurements in $\Phi$ are said to be \textbf{connected}  if they are related by the equivalence relation generated by the relation of being a deformation. The symbol $\Phi/M$ denotes the partition of $\Phi$  induced by  this equivalence relation. We refer to  $\Phi/M$ as the \textbf{quotient} of the incarnation $(\Phi,M)$. 
The partitions $\Phi/M$ and $\Phi/\langle M\rangle$ coincide. 
   If $(\Phi,M)$ is a group incarnation, then  $\Phi/M$ coincide with the orbit partition of the usual group action of $M$ on  $\Phi$.

 Let $(\Phi,M)$ be an incarnation.
  For a measurement  $\psi$ in $\Phi$, the symbol $[\psi ]$  denotes the block in  $\Phi/M$ containing $\psi$. Explicitly,  $[\psi ]$ is the subset of $\Phi$ consisting of all the measurements connected to $\psi$. Note that, for all $g$ in $M$, if $\phi$ is connected to $\psi$, then 
   $\phi g$ is also connected to $\psi$. We thus have the following  inclusions:
   \[\begin{tikzcd}
   M\ar[hook]{r} \ar[hook']{d} & \text{End}_{\Phi}(X)\ar[hook]{d}\\
    \text{End}_{[\psi]}(X)\ar[hook]{r} & \text{End}(X)
   \end{tikzcd}\]
 The $M$ incarnation $([\psi],M)$ of the block $[\psi]$, given by the above  inclusions $M\subset  \text{End}_{[\psi]}$, is called a \textbf{block incarnation} 
 of $(\Phi,M)$. In this way we can think about $[\psi]$ and $([\psi],M)$ as a new data set.

%  We  think  about the equivalence class $[\phi]$ as a new data set. 
 %For all $g$ in $M$, if $\psi$ is connected to $\phi$, then 
%   $\psi g$ is also connected to $\phi$. In this way, for any block $\Psi$ in $\Phi/M$, we have that $M\subset \text{End}_{\Psi)}(X)$, leading
 %  to an  $M$-incarnation $(\Psi,M)$ of $\Psi$. 
    
%   Thus $I$ induces a function
%   which we denote by the  symbol $I|_{[\phi]}\colon M\to \text{End}_{[\phi]}(X)$. In this way, for any block $\Psi$ in $\Phi/M$, we  obtain an   $M$-incarnation $(\Psi,M,I|_{\Psi})$ of $\Psi$.

An incarnation $(\Phi,M)$  is called  \textbf{transitive} if all the elements in $\Phi$ are connected to each other. For example, let 
$M$ be a finite submonoid of  $\text{End}(X)$.  For a given function $\phi\colon X\to\mathbf{R}$, define a data set
$\phi M:=\{\phi g\  |\ g\in M\}$ to consist of all functions of the form  $x\mapsto \phi(g(x))$ for all $g$ in $M$.  Then  every $g\colon X\to X$ in $M$  is a $\phi M$-operation.  The obtained incarnation $(\phi M, M)$ is transitive. Any transitive group incarnation is  of  such form. 
For all measurements $\phi$ in any incarnation  $(\Phi,M)$, the block incarnation 
$([\phi],M)$ is transitive.  Any transitive incarnation is  of this form.

%$X$ be a finite set with an action of a finite monoid $M$. 
%If $g$ is an element  in $M$, then we use the same symbol $g\colon X \to X$ to denote the function that maps an element $x$ to $g(x)$.
%Let $\phi\colon X\to\mathbf{R}$ be a function. Define a data set
%$\phi M:=\{\phi g\  |\ g\in M\}$ to consist of all functions of the form  $x\mapsto \phi(g(x))$ for all $g$ in $M$. Then, for every $g$ in $M$,
%the function $g\colon X\to X$  is a $\phi M$-operation.  By assigning to an element $g$ in $M$ this operation, we obtain a transitive $M$-incarnation 
%of $\phi M$ which we denote by $(\phi M,M)$.
%Any transitive group incarnation is  of  such form. For any incarnation  $(\Phi,M)$  and any measurement $\phi$, the incarnation 
%$([\phi],M)$ is transitive.  Any transitive incarnation is  of this form.
%\todo{obs: blocks are NOT the minimal M-invariants subsets. They are the all transitive.}

 Let $(\Phi,M)$ be an   incarnation. 
A subset  $\Omega\subset \Phi$ is called \textbf{independent} if  no element in $\Omega$  is a deformation of any other element in $\Omega$,
explicitly:
$\omega\not\in \omega'M$ for all  $\omega\not=\omega'$ in $\Omega$.
% A subset  $\Omega\subset \Phi$ is independent with respect to $(\Phi,M,I)$ if and only if it is independent  with respect to the monoid incarnation  $(\Phi,\langle M\rangle)$.

A \textbf{basis} of $(\Phi,M)$ is an independent  subset  $\Omega\subset \Phi$  such that $\Omega M=\Phi$ ($\Omega$ generates $(\Phi,M)$). % A subset $\Omega\subset \Phi$ is a basis with respect to $(\Phi,M,I)$ if and only if it is a basis with respect to  the monoid incarnation  $(\Phi,\langle M\rangle)$.

Two measurements $\psi$ and $\phi$ are called  \textbf{indistinguishable}  if $\psi$ is a deformation of $\phi$ and $\phi$ is a deformation of $\psi$.
% Two measurements are indistinguishable in $(\Phi,M,I)$ if and only if they are indistinguishable in  $(\Phi,\langle M\rangle)$.
 If $(\Phi,M)$ is a group incarnation, then $\psi$ and $\phi$ are indistinguishable if and only if $\psi=\phi g$ for some $g$ in $M$, i.e., if $\psi$ is a deformation of $\phi$.   
 
 \begin{proposition}\label{sasfdfshgfd}
\begin{enumerate}
\item Every incarnation  has a basis.
\item 
Let $\Omega,\Omega'\subset \Phi$ be two bases of an incarnation  $(\Phi,M)$. Then there is a bijection $\sigma\colon\Omega\to \Omega'$
such that $\omega$ and $\sigma(\omega)$ are indistingishable for every $\omega$ in $\Omega$.
\end{enumerate}
\end{proposition}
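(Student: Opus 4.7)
For part (1), I would argue by cardinality minimality. Since $\Phi$ is finite and $\Phi M = \Phi$ (so the generating subsets of $\Phi$ form a nonempty finite family), pick a generating subset $\Omega \subset \Phi$ of minimal cardinality. I claim such $\Omega$ is automatically independent. Suppose otherwise: there exist distinct $\omega, \omega' \in \Omega$ with $\omega \in \omega' M$, meaning either $\omega = \omega'$ (excluded) or $\omega = \omega' g_1 \cdots g_k$ for some $g_i \in M$. Then $\Omega \setminus \{\omega\}$ still generates, because every element of $\Phi$ that was reachable as $\omega h_1 \cdots h_\ell$ (or as $\omega$ itself) can be rewritten as $\omega' g_1 \cdots g_k h_1 \cdots h_\ell$ (or $\omega' g_1 \cdots g_k$), and so lies in $(\Omega \setminus \{\omega\}) M$. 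This contradicts minimality.

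For part (2), the key device is the preorder on $\Phi$ given by $\phi \preceq \psi$ iff $\psi \in \phi M$. Reflexivity holds because the convention in the paper places $\phi$ itself in $\phi M$, and transitivity follows by concatenating defining sequences (with the convention that an empty sequence corresponds to equality). In this language, $\Omega \subset \Phi$ generates iff every $\phi \in \Phi$ has some $\omega \in \Omega$ with $\omega \preceq \phi$, and $\Omega$ is independent iff no two distinct elements are $\preceq$-comparable. Fix two bases $\Omega, \Omega'$ and a point $\omega \in \Omega$. Since $\Omega'$ generates, there is $\omega' \in \Omega'$ with $\omega' \preceq \omega$; since $\Omega$ generates, there is $\omega'' \in \Omega$ with $\omega'' \preceq \omega'$. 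By transitivity $\omega'' \preceq \omega$, and independence of $\Omega$ forces $\omega'' = \omega$. Hence $\omega \preceq \omega' \preceq \omega$, which is precisely the statement that $\omega$ and $\omega'$ are indistinguishable.

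To promote this to a bijection, I check uniqueness: if $\omega'_1, \omega'_2 \in \Omega'$ both satisfy $\omega'_i \preceq \omega$, then both are indistinguishable from $\omega$ by the argument above, hence $\omega'_1 \preceq \omega \preceq \omega'_2$, and independence of $\Omega'$ forces $\omega'_1 = \omega'_2$. So $\sigma(\omega) := \omega'$ is a well-defined function $\Omega \to \Omega'$. The symmetric construction gives $\tau \colon \Omega' \to \Omega$, and the uniqueness argument run from either side shows that $\tau \sigma$ and $\sigma \tau$ are identities. Thus $\sigma$ is the desired bijection. The only real subtlety is keeping track of the paper's slightly idiosyncratic definition of $\Omega M$, which is exactly what makes $\preceq$ reflexive even when $M$ contains no identity; once the preorder is in place, everything follows mechanically from transitivity together with independence.
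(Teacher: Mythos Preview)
Your proof is correct. For part~(2) your argument is essentially the paper's, just recast in the language of the preorder $\preceq$ and with the well-definedness and bijectivity of $\sigma$ spelled out explicitly (the paper leaves these implicit after observing $\omega=\omega_1$).

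For part~(1) you take the dual route: the paper picks a \emph{maximal independent} subset and shows it generates, while you pick a \emph{minimal generating} subset and show it is independent. Both are standard and equally short here; your version has the minor advantage that the starting family (generating subsets) is obviously nonempty since $\Phi$ itself generates, whereas the paper's starting family (independent subsets) is nonempty because singletons are independent---neither needs remark. The paper's approach requires the slightly more delicate observation that $\Omega'=\{\psi\}\cup\{\omega\in\Omega\mid \omega\notin\psi M\}$ remains independent and has strictly larger span, whereas your removal step is immediate. Either way, the underlying combinatorics is that of a finite preorder, and both arguments are instances of the usual existence proof for bases in that setting.
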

\begin{proof}
\noindent
(1):\quad  Let $(\Phi,M)$ be an incarnation. Choose $\Omega\subset  \Phi$ to be an  independent  subset for which $\Omega M$ is maximal.
Existence of $\Omega$ is guaranteed by finiteness of $\Phi$.
We claim that $\Omega M=\Phi$ and hence $\Omega$ is a basis. If this is not the case, let $\psi$ be in $\Phi\setminus\Omega M $. Define $\Omega'=\{\psi\}\cup\{\omega\in \Omega\ |\ \omega\not\in \{\psi\}M\}$.  Then $\Omega'M$ contains $\Omega$ and hence $\Omega M$. It also contains
$\psi$.  Since    $\Omega' $ is independent, we would obtain a   contradiction  to the maximality assumption about $\Omega M$, and thus the claim holds.
\smallskip

\noindent
(2):\quad
Let $\omega$ be in $\Omega$.
Since $\Omega M=\Phi=\Omega' M$, there is  $\omega'$ in $\Omega'$ such that $\omega\in \omega'M$.
Let $\omega_1$ in $\Omega$ be such that $\omega'\in  \omega_1M$.  
Then $\omega\in  \omega'M\subset  \omega_1M$, and hence  $\omega=\omega_1$ by the  independence of $\Omega$.
The desired bijection is then  given by $\omega\mapsto \omega'$.
\end{proof}

According to Proposition~\ref{sasfdfshgfd},  any two bases of an incarnation have the same number of elements. We define the \textbf{dimension} of an incarnation to be the cardinality of its bases. For example a transitive group incarnation has dimension $1$. In fact  for a transitive group incarnation any  single measurement  forms a basis. More generally, the dimension
of a group incarnation $(\Phi,M)$ equals the cardinality of $\Phi/M$. In this case   $\Omega\subset \Phi$ is a basis if and only if,
for every   block $\Psi$ in  
 $\Phi/M$, the intersection $\Omega\cap \Psi$ has only one element.
 Since being a basis depends only on the monoid $\langle M\rangle$, the dimension of a group-like incarnation $(\Phi,M)$ equals  also the  cardinality of $\Phi/M$, and similarly a subset   $\Omega\subset \Phi$ is a basis if and only if, for every   block $\Psi$ in the partition $\Phi/M$, the intersection $\Omega\cap \Psi$ has only one element.

The dimension of a transitive monoid incarnation can be  bigger than $1$.  For example,  let $X= \{x_1,x_2,x_3\}$ and consider 
  functions $\phi_1,\phi_2,\phi_3\colon X\to\mathbf{R}$ and  $g_1,g_2,g_3\colon X\to X$ defined as follows:
\[{\begin{array}{c|c|c|c|c|c}
\phi_1(x_1)=2 &\phi_2(x_1)=2  &   \phi_3(x_1)=1 &g_1(x_1)=x_2 & g_2(x_1)=x_2&g_3(x_1)=x_1  \\
\phi_1(x_2)=2 &\phi_2(x_2)=2&   \phi_3(x_2)=2 & g_1(x_2)=x_2& g_2(x_2)=x_2 & g_3(x_2)=x_2\\
 \phi_1(x_3)=3 &\phi_2(x_3)=2 &  \phi_3(x_3)=2 & g_1(x_3)=x_3& g_2(x_3)=x_2 & g_3(x_3)=x_2
\end{array}}
\]
The compositions $g_ig_j$ and $ \phi_i g_j$ are described by the following tables:
\[{\begin{array}{c|c|c|c}
 & g_1 & g_2 & g_3 \\
 \hline
 g_1 & g_1&g_2 & g_2\\
  \hline
 g_2 & g_2 & g_2 & g_2\\
  \hline
 g_3 &g_2 & g_2 & g_3
\end{array}\quad \quad\quad
\begin{array}{c|c|c|c}
 & g_1 & g_2 & g_3 \\
  \hline
 \phi_1 & \phi_1 &  \phi_2 &  \phi_2\\
   \hline
 \phi_2 & \phi_2 & \phi_2 & \phi_2\\
  \hline
 \phi _3 &\phi_2 & \phi_2 & \phi_3
\end{array}
}
\]
Thus the functions $g_1$, $g_2$, and $g_3$ are  $\Phi:=\{ \phi_1,\phi_2,\phi_3\}$-operations.
Furthermore the subset $M:=\{\text{id}, g_1,g_2,g_3\}\subset \text{End}_{\Phi}(X)$ is a submonoid. 
The incarnation $(\Phi,M)$ is a transitive monoid incarnation.  Since  the set $\{\phi_1, \phi_3\}$
is independent and generates $(\Phi,M)$, it is a basis. Thus $(\Phi,M)$  is an example of a transitive monoid incarnation of dimension $2$.
%%% nirvana
\section{Nirvana}\label{dfadfhbsfg}
To compare  incarnations of various data sets we are going to use  SEOs (set equivariant  operators). 
A \textbf{SEO} from an incarnation $(\Phi,M)$ to an incarnatiopn $(\Psi,N)$, denoted as $(\alpha,T)\colon (\Phi,M)\to (\Psi,N)$,  is a pair of functions
$(\alpha\colon \Phi\to \Psi, T\colon M\to N)$ for which the following diagram commutes:
\[\begin{tikzcd}[column sep=30]
  \Phi\times M \ar[hook]{r}\ar{d}[swap]{\alpha\times T}  & \Phi\times \text{End}_{\Phi}(X)\ar{r}{\text{action}} & \Phi\ar{d}{\alpha}\\
 \Psi\times N\ar[hook]{r} &  \Psi\times  \text{End}_{\Psi}(Y)\ar{r}{\text{action}} & \Psi
 \end{tikzcd}
 \]
  Explicitly,  for $\phi$ in $\Phi$ and  $g$ in $M$, it holds $\alpha(\phi g) = \alpha(\phi)T(g)$. This implies that for $\phi$ in $\Phi$ and 
  every sequence of elements  $g_1,\ldots,g_k$ in $M$, it holds: 
  \[\alpha(\phi g_1\cdots g_k)=\alpha(\phi) T(g_1)\cdots T(g_k).\]
  Be however aware that in general  there may not be a  homomorphism $T\colon\langle M\rangle\to \langle N\rangle$ of monoids which extends $T\colon M\to N$ and makes the following diagram commutative:
  
  \[\begin{tikzcd}[column sep=30]
  \Phi\times M \ar[hook]{r}\ar{d}[swap]{\alpha\times T}  & \Phi\times \langle M\rangle\ar[hook]{r}\ar{d}[swap]{\alpha\times T}  & \Phi\times \text{End}_{\Phi}(X)\ar{r}{\text{action}} & \Phi\ar{d}{\alpha}\\
 \Psi\times N\ar[hook]{r} & \Psi\times \langle N\rangle\ar[hook]{r}&  \Psi\times  \text{End}_{\Psi}(Y)\ar{r}{\text{action}} & \Psi
 \end{tikzcd}
 \]

 A SEO between monoid incarnations $(\alpha,T)\colon (\Phi,M)\to (\Psi,N)$ is called a MEO (monoid equivariant  operators) if $T\colon M\to N$ is a monoid homomorphism.
 A MEO between group incarnations is also called a GEO (group equivariant  operators).

%To compare  incarnations of various data sets we use  MEOs (monoid equivariant  operators). 
%Let $(\Phi,  M)$ and $(\Psi, N)$ be incarnations. 
%A \textbf{MEO} from $(\Phi,M)$ to  $(\Psi,N)$, denoted as $(\alpha,T)\colon (\Phi,M)\to (\Psi,N)$,  is a pair of functions $(\alpha\colon \Phi\to \Psi, T\colon M\to N)$, where $T$ is a monoid homomorphism, for which the following diagram commutes:

%\begin{equation}
%\begin{tikzcd}
%\Phi\times M \arrow[rr, "action"] \arrow[d, "\alpha\times T"'] &  & \Phi \arrow[d, "\alpha"] \arrow[d] \\
%\Psi\times N \arrow[rr, "action"']                             &  & \Psi                   
%\end{tikzcd}
%\end{equation}
%In other words, we want that, for every function $\phi$ in $\Phi$ and every element $g$ of $M$, it holds $\alpha(\phi g) = \alpha(\phi)T(g)$.
% \[\begin{tikzcd}[column sep=30]
%   \Phi\times M \ar{r}{\text{id}\times I}\ar{d}[swap]{\alpha\times T}  & \Phi\times \text{End}_{\Phi}(X)\ar{r}{\text{action}} & \Phi\ar{d}{\alpha}\\
%  \Psi\times N\ar{r}{\text{id}\times J} &  \Psi\times  \text{End}_{\Psi}(Y)\ar{r}{\text{action}} & \Psi
%  \end{tikzcd}\]
 %This  is equivalent to the  equality $\alpha(\phi I(g))=\alpha(\phi)J(T(g))$ for all  $\phi$ in $\Phi$ and $g$ in $M$. 

Let $(\alpha_0,T_0)\colon (\Phi_0,M_0)\to (\Phi_1,M_1)$  and $(\alpha_1,T_1)\colon (\Phi_1,M_1)\to (\Phi_2,M_2)$
 be SEOs. Then  the compositions $(\alpha_1\alpha_0,T_1T_0)$ form    a SEO. Furthermore the pair $(\text{id}_\Phi,\text{id}_M)\colon (\Phi,M)\to(\Phi, M)$ is also  a SEO.  
 The composition of SEOs is an associative   operation and defines a category structure on the collection of data set incarnations with 
 SEOs as morphisms.  This category is called \textbf{Nirvana}. %Its objects are incarnations of data sets.

 A SEO  $(\alpha,T)\colon (\Phi,M)\to (\Psi,N)$ is an isomorphism if and only if both of the functions $\alpha$ and $T$ are bijections.
 Isomorphisms preserve independence and being a basis:

 \begin{proposition}\label{asfafdgsfgb}
 If   $(\alpha,T)\colon (\Phi,M)\to (\Psi,N)$  is  an isomorphism, then
 a subset $\Omega\subset\Phi$ is independent or a basis if and only if its image $\alpha(\Omega)\subset \Psi$ is  independent or a basis.
 \end{proposition}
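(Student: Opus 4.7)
The plan is to isolate a single key identity relating the two actions and then derive both claims (independence and basis) from it using the bijectivity of $\alpha$ and $T$. The identity I want to establish first is
\[
\alpha(\Omega M) \;=\; \alpha(\Omega)\, N
\]
for every subset $\Omega\subset\Phi$. The inclusion $\alpha(\Omega M)\subset \alpha(\Omega)N$ is immediate from the SEO equation $\alpha(\phi g_1\cdots g_k)=\alpha(\phi)T(g_1)\cdots T(g_k)$. The reverse inclusion is where I use that $T\colon M\to N$ is a bijection: any element of $\alpha(\Omega)N$ has the form $\alpha(\omega)h_1\cdots h_k$, and writing each $h_i=T(g_i)$ rewrites it as $\alpha(\omega g_1\cdots g_k)\in \alpha(\Omega M)$.

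With this in hand, the basis claim is essentially immediate. If $\Omega M=\Phi$, apply $\alpha$ (which is onto $\Psi$) and use the identity to get $\alpha(\Omega)N=\Psi$. Conversely, if $\alpha(\Omega)N=\Psi$, then for every $\phi\in\Phi$ the image $\alpha(\phi)$ lies in $\alpha(\Omega)N=\alpha(\Omega M)$, and since $\alpha$ is injective this forces $\phi\in \Omega M$.

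For the independence claim, I first handle the forward direction. Assume $\Omega$ is independent and suppose, for contradiction, that $\alpha(\omega)\in \alpha(\omega')N$ for some distinct $\alpha(\omega),\alpha(\omega')\in\alpha(\Omega)$ (with $\omega\neq\omega'$ by injectivity of $\alpha$). Using the identity, $\alpha(\omega)\in \alpha(\{\omega'\}M)=\alpha(\omega' M)$, and injectivity of $\alpha$ gives $\omega\in\omega' M$, contradicting independence of $\Omega$. The converse is symmetric: if $\omega\in\omega'M$ with $\omega\neq\omega'$ in $\Omega$, applying the SEO equation yields $\alpha(\omega)\in\alpha(\omega')N$, and since $\alpha$ is injective the images $\alpha(\omega)$ and $\alpha(\omega')$ are distinct, breaking independence of $\alpha(\Omega)$.

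There is no real obstacle here; the only subtle point is that the elements of $\omega M$ (resp.\ $\alpha(\omega)N$) are defined via arbitrary \emph{sequences} of generators rather than a single element, so one has to be careful not to assume $M$ and $N$ are monoids. This is precisely why the surjectivity of $T$ (rather than a homomorphism property) is what makes the identity $\alpha(\Omega M)=\alpha(\Omega)N$ go through cleanly.
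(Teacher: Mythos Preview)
Your proof is correct and follows essentially the same route as the paper's: the paper isolates the equivalence $\phi_1\in\phi_2 M \Longleftrightarrow \alpha(\phi_1)\in\alpha(\phi_2)N$ (which is the singleton case of your identity $\alpha(\Omega M)=\alpha(\Omega)N$) and then reads off both independence and generation directly from it. Your explicit handling of sequences $g_1\cdots g_k$ rather than single elements is exactly the subtlety the paper's proof leaves implicit.
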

 \begin{proof}
 Assume $\alpha$ and $T$ are bijections. This assumption imply that $\phi_1$ belongs to $\phi_2 M$ if and only if
 $\alpha(\phi_1)$ belongs to $\alpha(\phi_2)N$.  It follows that two elements in $\Phi$ are (in)dependant if and only if 
 their images via $\alpha$ are (in)dependent in $\Psi$.   By the  same argument,  $\Omega M = \Phi$ if and only $\alpha(\Omega) T(M)=\alpha(\Phi)$.
 \end{proof}
  According to Proposition~\ref{asfafdgsfgb}  two isomorphic incarnations have the same dimension.
  
  The universal incarnations $(\Phi, \text{End}_{\Phi}(X))$ and  $(\Phi,\text{Aut}_{\Phi}(X))$ are special in the category Nirvana. 
 For any  $(\Phi,M)$, the pair $(\text{id}, i\colon M\hookrightarrow \text{End}_{\Phi}(X))$ defines a SEO
 $(\Phi,M)\to (\Phi,\text{End}_{\Phi}(X))$ called \textbf{canonical}. 
 %This  canonical SEO factors as:
%\[ \begin{tikzcd}
%(\Phi,M,I)\ar{r}{(\text{id},I)}\ar[bend right=20]{rr}[swap]{\text{canonical}} & (\Phi,\langle M\rangle)\ar{r}{(\text{id},\subset )} & (\Phi, \text{End}_{\Phi}(X))
%\end{tikzcd}\]
 If $(\Phi,M)$  is a group incarnation, %(the image of $I$ consists of bijective $\Phi$-operations), 
 then the pair
 $(\text{id}, i\colon M\hookrightarrow \text{Aut}_{\phi}(X))$
 defines a GEO $(\Phi,M)\to (\Phi,\text{Aut}_{\phi}(X))$ also called canonical.

%  A MEO $(\alpha,T)\colon (\Phi,M)\to (\Psi,N)$ is called a MEO (monoid equivariant  operator) if 
%  $(\Phi,M)$ and $(\Psi,N)$ are monoid incarnations  and $T$ is a monoid homomorphism. A MEO between group incarnations  is also called a GEO (group equivariant  operator). 
 
 The rest of this section is devoted to present three ways of constructing SEOs.
 \smallskip
 
 \noindent
 \textbf{Change of units.}\quad 
   Choose  a function $f\colon\mathbf{R}\to \mathbf{R}$. For any incarnation   $(\Phi,  M)$,  consider the data set $f\Phi$ (see Section~\ref{asfdsfghfw}).
 If $g$ is a $\Phi$-operation, then
it is also a $f\Phi$-operation. Thus there is an  inclusion $\text{End}_{\Phi}(X)\subset \text{End}_{f\Phi}(X)$, which
is an equality if  $f$ is invertible, therefore we have an incarnation $(f\Phi,M)$.
% The composition  $I\colon M\to \text{End}_{\Phi}(X)\subset \text{End}_{f\Phi}(X)$ defines an  $M$-incarnation $(f\Phi,M)$ of $f\Phi$. 
If $(\Phi,  M)$ is  a monoid or a group incarnation, then so is $(f\Phi,M)$.
The pair $(f-,\text{id}_M)\colon (\Phi,M)\to (f\Phi,M)$ is a SEO called the change of units along $f$.

Assume $f$ is invertible. 
If $(\alpha,T)\colon (\Phi,M)\to (\Psi,N)$ is a SEO, then the pair of functions $\left((f-)\alpha(f^{-1}-),T\right)$ forms a SEO between
 $(f\Phi,M)$  and $(f\Psi,N)$. The assignment $(\alpha,T)\mapsto ((f-)\alpha(f^{-1}-),T)$  is a self functor $\mathrm{C}(f)$ of Nirvana
 also called the change of units along $f$. It is an equivalence of categories. 
 %We define the functor $\mathcal{C}(f):\textbf{Nirvana}\longrightarrow\textbf{Nirvana}$ 
 %\begin{equation}
 %    \begin{split}
 %        &\mathcal{C}(f)((\Phi,M)) = (f\Phi, M)\\
 %        &\mathcal{C}(f)((\alpha,T)) = ((f-)\alpha(f^{-1}-),T)).
 %    \end{split}
 %\end{equation}
 %Then the composition $\mathcal{C}(f)\mathcal{C}(f^{-1})$ is the identity functor on \textbf{Nirvana}. 
 Indeed,
  \begin{equation*}
     \begin{split}
         \mathrm{C}(f)\mathrm{C}(f^{-1})((\Phi,M)) & = \mathrm{C}(f)(f^{-1}\Phi, M)=(\Phi, M)\\
         \mathrm{C}(f)\mathrm{C}(f^{-1})((\alpha,T)) &= \mathrm{C}(f)((f^{-1}-)\alpha(f-),T)) \\
         &= ((f-)(f^{-1}-)\alpha(f-)(f^{-1}-),T) = (\alpha,T).
     \end{split}
 \end{equation*}
 The same holds for $\mathrm{C}(f^{-1})\mathrm{C}(f)$, hence $\mathrm{C}(f)$ is an equivalence of categories.
 The SEOs  $(f-,\text{id}_M)\colon (\Phi,M)\to (f\Phi,M)$, for all incarnations $(\Phi,M)$,  form a natural transformation between the identity functor on \textbf{Nirvana} and the change of units along $f$ functor.
  \smallskip
 
 \noindent
 \textbf{Domain change.}\quad 
 Let  $(\Phi,M)$ and  $(\Psi,N)$ be incarnations of data sets  consisting of measurements on $X$ and  $Y$ respectively.
 A SEO $(\alpha,T)\colon (\Phi,M)\to (\Psi,N)$ is called \textbf{geometric} if there is a function $f\colon Y\to X$, called a realization of $(\alpha,T)$,   making the following diagram commutative for every $\phi$ in $\Phi$ and $g$ in $M$:
  \[\begin{tikzcd}[row sep=4]
Y\ar{r}{T(g)}\ar{dd}[swap]{f} & Y\ar{dr}[pos=0.3]{\alpha(\phi)}\ar{dd}[swap]{f}
 \\
 & & \mathbf{R}
 \\
 X\ar{r}[swap]{g} & X \ar{ur}[swap,pos=0.3]{\phi}
 \end{tikzcd}\]
 For example, let $(\Phi,M)$ be an  incarnation of a    data set consisting of
 measurements on $X$. Then the SEO $(\text{id}_\Phi, \text{id}_M)\colon (\Phi,M)\to (\Phi,M)$ is geometric. The identity function $\text{id}_X\colon X\to X$ is one of  its realizations. 

Let $Y\subset X$ be  $M$-invariant: $g(y)$  belongs to $Y$ for all $y$ in $Y$ and $g$ in $M$.
%Consider next an $M$ invariant  subset $Y\subset X$, i.e., a subset such that, for all $y$ in $Y$ and $g$ in $M$, the element
%$I(g)(y)$ also belongs to $Y$. 
Consider   the data set   $\Phi|_{Y}$ given by the domain change along the  inclusion $Y\subset X$.
The  restriction of $g$ to $Y$ is a $\Phi|_{Y}$-operation for every $g$ in $M$. We use the symbol  $T_Y\colon M\to \text{End}_{\Phi|_{Y}}(Y)$
to denote  the function that maps $g$ in $ M$ to the restriction of $g$ to $Y$.  The incarnation $(\Phi|_{Y}, T_Y(M))$ is called the \textbf{restriction} of
$(\Phi,M)$ to the invariant subset $Y$.
%%([obs): this is not right, the arrows should be reversed] The pair $(\Phi|_Y\subset \Phi, \text{id}_M)$ forms a  geometric MEO. The inclusion $Y\subset X$ is one of its realizations. 
The pair $(\Phi\twoheadrightarrow\Phi|_Y,T_Y)$ forms a geometric SEO. The inclusion $i_Y\colon Y\hookrightarrow X$ is one of its realizations.

Let $f\colon Y\to X$ be a bijection. Consider the data set $\Phi f$. For any $g$ in $M$, the function
$f^{-1}gf\colon Y\to Y$ is a  $\Phi f$-operation. Define $T\colon M\to \text{End}_{\Phi f}(Y)$ to map  $g$ in $M$ to $f^{-1}gf$. The incarnation $(\Phi f,T(M))$ is called the domain change of  $(\Phi,M)$ along $f$. The pair  $(-f\colon \Phi\to \Phi f, T)$ forms a  geometric SEO and 
 $f\colon Y\to X$
is one of its realizations.
\smallskip
 
 \noindent
 \textbf{Extending from a basis.}\quad SEOs can be effectively constructed using bases. 
\begin{proposition}\label{aDFADSGHFJDN}
  Let $(\Phi,  M)$ and $(\Psi, N)$ be incarnations and $\Omega$ be a basis of $(\Phi,  M)$.
  Then  two SEOs $(\alpha, T), (\alpha', T')\colon 
 (\Phi,  M)\to (\Psi, N)$  are equal if and only if $T=T'$ and $\alpha(\omega)=\alpha'(\omega)$ for any $\omega$ in $\Omega$.
\end{proposition}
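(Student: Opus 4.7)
The plan is to prove the equivalence by observing that the forward implication is immediate from the definition of equality of SEOs, so the real content is the reverse implication: that agreement on a basis (together with $T=T'$) forces agreement everywhere.

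First I would recall the iterated form of the SEO equivariance relation noted just after the definition: for any $\phi \in \Phi$ and any finite sequence $g_1, \ldots, g_k \in M$, we have
\[\alpha(\phi g_1 \cdots g_k) = \alpha(\phi)\, T(g_1) \cdots T(g_k),\]
and analogously for $(\alpha', T')$. This is the essential tool because it lets us propagate agreement from a single measurement to all of its descendants under the $M$-action.

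Next I would invoke the basis property. By definition of a basis, $\Omega M = \Phi$, which means every measurement $\phi \in \Phi$ either lies in $\Omega$ or is of the form $\omega g_1 \cdots g_k$ for some $\omega \in \Omega$ and some sequence $g_1, \ldots, g_k \in M$ (and in the monoid case this collapses to a single $\omega g$). Pick an arbitrary $\phi \in \Phi$ and such a presentation. If $\phi \in \Omega$, then $\alpha(\phi) = \alpha'(\phi)$ holds by hypothesis. Otherwise, using the iterated equivariance on both sides, the hypotheses $T = T'$ and $\alpha(\omega) = \alpha'(\omega)$ yield
\[\alpha(\phi) = \alpha(\omega)\,T(g_1)\cdots T(g_k) = \alpha'(\omega)\,T'(g_1)\cdots T'(g_k) = \alpha'(\phi).\]
Combined with $T = T'$, this gives $(\alpha, T) = (\alpha', T')$ as SEOs.

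There is essentially no obstacle here: the argument is a direct unwinding of the two definitions. The only mild subtlety worth flagging in the write-up is that the presentation $\phi = \omega g_1 \cdots g_k$ is not canonical (a given $\phi$ may arise from several words), but this is harmless because the conclusion $\alpha(\phi) = \alpha'(\phi)$ depends only on $\phi$ itself, and every such presentation yields the same equality through the computation above.
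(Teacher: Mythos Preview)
Your proof is correct and follows essentially the same approach as the paper's: both reduce to the iterated equivariance identity $\alpha(\omega g_1\cdots g_k)=\alpha(\omega)T(g_1)\cdots T(g_k)$ and use that $\Omega$ generates $\Phi$ under $M$. Your separate handling of the case $\phi\in\Omega$ and your remark on non-uniqueness of the presentation are minor elaborations, but the argument is the same.
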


\begin{proof}
 The only non trivial thing to prove in the statement of the proposition is that $\alpha=\alpha'$ when their restrictions to $\Omega$ are equal.
Assume $T=T'$ and $\alpha(\omega)=\alpha'(\omega)$ for any $\omega$ in $\Omega$.
 Since $\Omega$ generates $(\Phi,  M)$, any element in $\Phi$ is of the form $\phi=\omega g_1\cdots g_k$ for some 
  $\omega$ in $\Omega$ and a sequence of  elements
$g_1,\ldots, g_k$ in $M$. The assumption and the fact that $(\alpha, T)$ and $(\alpha', T)$ are SEOs, imply:
\[\alpha(\phi)=\alpha(\omega g_1\cdots g_k)=\alpha(\omega)T(g_1)\cdots T(g_k)=\]
\[=\alpha'(\omega)T(g_1)\cdots T(g_k)=\alpha'(\omega g_1\cdots g_k)=\alpha'(\phi).
\]
Consequently $\alpha=\alpha'$.
\end{proof}

According to  Proposition~\ref{aDFADSGHFJDN}, a SEO is determined by what it does on a basis of the domain. This is analogous to 
a linear map between  vector spaces being determined by  its values on a basis.   However unlike
for linear maps,  we cannot freely map elements of a basis of an incarnation to obtain a SEO. To obtain a SEO certain relations have to be preserved. Let  $(\Phi,  M)$ be an incarnation. A \textbf{relation} between measurements $\phi$ and $\psi$ in $\Phi$
 is by definition a pair of sequences 
 $\left( (g_1,\ldots,g_k),(h_1,\ldots,h_l)\right)$  of elements in $M$  for which the following equality holds:
 $\phi g_1\cdots g_k=\psi h_1\cdots h_l$.

\begin{proposition}\label{asfdfhdgfv}
Let $(\Phi,  M)$ and $(\Psi, N)$ be incarnations,  $\Omega$ be a basis of $(\Phi,  M)$, and 
 $\bar{\alpha}\colon\Omega\to \Psi$ and $T\colon M\to N$ be functions.
  \begin{enumerate}
 \item Assume that for every relation $\left( (g_1,\ldots,g_k),(h_1,\ldots,h_l)\right)$   between  any two elements $\omega$,  $\omega'$ in $\Omega$, the pair $\left( (T(g_1),\ldots,T(g_k)),(T(h_1),\ldots,T(h_l))\right)$ is a relation between $\alpha(\omega)$ and $\alpha(\omega')$  in $\Psi$. Under this assumption,    there is a unique SEO $(\alpha, T)\colon (\Phi,  M)\to (\Psi, N)$ for which the restriction of $\alpha\colon \Phi\to\Psi$ to $\Omega$ is ${\bar{\alpha}}$. 
 \item Assume $(\Phi,  M)$ and $(\Psi, N),$ are monoid incarnations, $T$ is a monoid homomorphism, and if $\omega g=\omega' h$ for some $\omega, \omega'$ in $\Omega$ and 
 $g,h$ in $M$, then  $\alpha(\omega) T(g)=\alpha(\omega')T(h)$. Under these assumptions,  there is a unique MEO $(\alpha, T)\colon (\Phi,  M)\to (\Psi, N)$ for which the restriction of $\alpha\colon \Phi\to\Psi$ to $\Omega$ is $\bar{\alpha}$. 
 \item Assume $(\Phi,  M)$ and $(\Psi, N)$ are group incarnations, $T$ is a group homomorphism, and if $\omega=\omega g$, for some
$ \omega$ in $\Omega$ and 
 $g$ in $M$, then  $\alpha(\omega) =\alpha(\omega)T(g)$.  Under these assumptions,   there is a unique GEO $(\alpha, T)\colon (\Phi,  M)\to (\Psi, N)$ for which the restriction of $\alpha\colon \Phi\to\Psi$ to $\Omega$ is $\bar{\alpha}$. 
 \end{enumerate}
 \end{proposition}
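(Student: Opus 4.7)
The plan is to handle (1) first by constructing $\alpha$ on all of $\Phi$ via basis expansions, then derive (2) and (3) by checking that the stronger-looking hypothesis of (1) reduces to what is stated in each case. Uniqueness in each part is already granted by Proposition~\ref{aDFADSGHFJDN}, so only existence requires work.

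For part (1), the plan is to define, for each $\phi\in\Phi$, a representation $\phi=\omega g_1\cdots g_k$ with $\omega\in\Omega$ and $g_i\in M$ (allowing $k=0$, corresponding to $\phi=\omega\in\Omega$), which exists because $\Omega$ generates $(\Phi,M)$. I would then set
\[\alpha(\phi):=\bar{\alpha}(\omega)\,T(g_1)\cdots T(g_k).\]
The hard step is well-definedness: if $\phi=\omega g_1\cdots g_k=\omega' h_1\cdots h_l$ are two such expansions, then by definition this is a relation between $\omega$ and $\omega'$, so the hypothesis gives $\bar{\alpha}(\omega)T(g_1)\cdots T(g_k)=\bar{\alpha}(\omega')T(h_1)\cdots T(h_l)$. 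Once $\alpha$ is well defined, verifying the SEO identity $\alpha(\phi g)=\alpha(\phi)T(g)$ amounts to appending $g$ to a chosen expansion of $\phi$, and the restriction to $\Omega$ is $\bar{\alpha}$ since we may take $k=0$ when $\phi\in\Omega$.

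For part (2), in the monoid setting every $\phi\in\Phi$ is of the form $\omega g$ with $\omega\in\Omega$ and $g\in M$ (because $M$ contains the identity and is closed under composition, so a sequence $g_1\cdots g_k$ collapses to a single element of $M$). The natural candidate is $\alpha(\omega g):=\bar{\alpha}(\omega)T(g)$, and well-definedness is \emph{exactly} the stated hypothesis that $\omega g=\omega' h$ implies $\bar{\alpha}(\omega)T(g)=\bar{\alpha}(\omega')T(h)$. Equivalently, I would verify that the hypothesis of part~(1) is implied here: a relation $\omega g_1\cdots g_k=\omega'h_1\cdots h_l$ rewrites as $\omega g=\omega'h$ for $g=g_1\cdots g_k$ and $h=h_1\cdots h_l$, and since $T$ is a monoid homomorphism $T(g)=T(g_1)\cdots T(g_k)$ and analogously for $h$, so the hypothesis of (2) implies the one of (1). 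The MEO condition then follows from (1).

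For part (3), I would reduce to (2) by showing that in the group setting the stated stabilizer hypothesis implies the full hypothesis of (2). For a group incarnation, a basis picks exactly one representative from each orbit in $\Phi/M$. Hence if $\omega g=\omega' h$ with $\omega,\omega'\in\Omega$ and $g,h\in M$, then $\omega'=\omega g h^{-1}$ lies in the orbit of $\omega$, forcing $\omega'=\omega$. Then $gh^{-1}$ stabilises $\omega$, so by hypothesis $\bar{\alpha}(\omega)T(gh^{-1})=\bar{\alpha}(\omega)$; since $T$ is a group homomorphism this rearranges to $\bar{\alpha}(\omega)T(g)=\bar{\alpha}(\omega)T(h)=\bar{\alpha}(\omega')T(h)$, which is precisely the hypothesis of (2). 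The main obstacle throughout is the well-definedness verification in (1); once that is in hand, parts (2) and (3) are essentially exercises in translating their hypotheses into that of (1) using the extra algebraic structure available.
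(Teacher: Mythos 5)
Your construction is the same one the paper uses: define $\alpha$ on an arbitrary element by choosing a basis expansion $\phi=\omega g_1\cdots g_k$, setting $\alpha(\phi)=\bar{\alpha}(\omega)T(g_1)\cdots T(g_k)$, and observing that well-definedness is exactly the relation-preservation hypothesis, with uniqueness supplied by Proposition~\ref{aDFADSGHFJDN}. The difference is organizational: the paper writes out only part (2) and declares (1) and (3) ``analogous,'' whereas you prove (1) in full generality and then \emph{derive} (2) and (3) by showing their hypotheses imply the hypothesis of (1) --- in (2) by collapsing words $g_1\cdots g_k$ to single monoid elements and using that $T$ is a homomorphism, and in (3) by using that a basis of a group incarnation meets each orbit exactly once, so any coincidence $\omega g=\omega' h$ forces $\omega=\omega'$ and reduces to the stabilizer condition via $gh^{-1}\in M_\omega$. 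Both reductions are correct (the orbit argument for (3) is exactly the content the paper leaves implicit), and your version makes the logical dependence among the three parts explicit rather than appealing to analogy. The only point worth flagging is a convention you are silently using in part (1): to get $\alpha|_\Omega=\bar{\alpha}$ you allow the empty word ($k=0$) both in expansions and in relations, so that an identity $\omega=\omega h_1\cdots h_l$ with $\omega\in\Omega$ counts as a relation covered by the hypothesis; since independence of $\Omega$ rules out $\omega=\omega' h_1\cdots h_l$ for $\omega\neq\omega'$, this is the only degenerate case, and it is harmless, but you should state the convention.
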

\begin{proof}
Since the proofs are analogous, we illustrate only how to show statement (2).
%The prove of \textbf{2} follows. With similar arguments one could easily prove the other two parts. 
For every $\phi$  in $ \Phi$, there exist (not necessarily unique) $\omega$ in $ \Omega$ and $g$ in $ M$ such that $\phi = \omega g$. 
The assumption implies that  the expression $\alpha(\omega) T(g)$ depends on $\phi$ and not on the choices of $\omega$ and $ g$
for which  $\phi = \omega g$.   Thus by mapping $\phi$ in $\Phi$ to $\alpha(\omega) T(g)$ in $\Psi$, we obtain a well defined  function also denoted by
 $\alpha \colon \Phi \to \Psi$.  The pair $(\alpha,T)$ is the desired MEO.
 The uniqueness is a consequence of  Proposition~\ref{aDFADSGHFJDN}.
\end{proof}

 For example assume $(\Phi,  M)$ is  a  transitive group incarnation and $(\Psi,  N)$ is a  group incarnation. Choose an element $\omega$ in $\Phi$. Recall that any such element  is a basis of  $(\Phi,  M)$.  Fix a group homomorphism $T\colon M\to N$.
 Then  any GEO   $(\alpha,T)\colon (\Phi,  M)\to (\Psi,  N)$ is uniquely determined by the element $\alpha(\omega)$ in $\Psi$.
 Thus by choosing a basis element $\omega$ in $\Phi$, we can  identify  the collection of 
 GEOs of the form $(\alpha,T)\colon (\Phi,  M)\to (\Psi,  N)$
  with a subset of $\Psi$. To describe this  subset explicitly, we apply  Proposition~\ref{asfdfhdgfv}.2.  It states that there is a GEO  $(\alpha,T)\colon (\Phi,  M)\to (\Psi,  N)$ (necessarily unique) such that $\alpha(\omega)=\psi$ if and only if the following implication holds:
if $\omega=\omega g$, then   $\psi=\psi T(g)$. The collection $M_\omega:=\{g\in M\ |\ \omega=\omega g\}$ is  the isotropy subgroup of $\omega$ 
consisting of all the elements  in $M$ that fix $\omega$. Thus GEOs of the form  $(\alpha,T)\colon (\Phi,  M)\to (\Psi,  N)$ can be identified  with the subset of all the elements in $\Psi$ whose isotropy group contains  $T(M_\omega)$.
%%% burnside
%Consider $\Phi:= \{\phi_i \colon X \to \mathbf{R} | i=1, \dots, m \}$.
%We define $\text{dom}(\Phi)$, the \emph{domain} of  dataset $\Phi$, as the set $X$ on whom all the functions in $\Phi$ are defined on. Notice that such a set is well defined, since all the functions in $\Phi$ have the same domain.
\section{Decomposition}\label{dsvdfhgfn}
Let  $(\Phi,M)$ be an incarnation of a data set $\Phi$.  Consider its quotient $\Phi/M$, which is a partition of  $\Phi$, and the block incarnations
$(\Psi,M)$ for every block $\Psi$ in $\Phi/M$ (see Section~\ref{asfadfhg}). Let $X$ be the domain of $\Phi$. Recall that the domain of the data set  $\coprod_{\Psi\in \Phi/M} \Psi$ is given by the disjoint union $\coprod_{\Psi\in \Phi/M} X$, and  that this data set consists of  functions 
$\coprod_{\Psi\in \Phi/M} X\to \mathbf{R}$  whose restrictions to all but one  summands $X$ in $\coprod_{\Psi\in \Phi/M} X$ 
is the $0$ function and the restriction to the remaining summand belongs to the   corresponding block of the partition $\Phi/M$. Define:
\[M'=\left\{\coprod_{\Psi\in \Phi/M} g\colon \coprod_{\Psi\in \Phi/M} X\to \coprod_{\Psi\in \Phi/M} X\ \ \ \  |\ \ \ \ \   g\in M\right\}.\]
Then $M'\subset \text{End}_{\coprod_{\Psi\in \Phi/M} \Psi}(\coprod_{\Psi\in \Phi/M} X)$. We call
$(\coprod_{\Psi\in \Phi/M} \Psi, M')$ the diagonal incarnation.  Define $T\colon M\to M'$ to map $g\colon X\to X$ in  $M$ to
$\coprod_{\Psi\in \Phi/M} g$ in $M'$. Define $\alpha\colon \Phi\to \coprod_{\Psi\in \Phi/M} \Psi$ to map $\phi$ to the function 
$\coprod_{\Psi\in \Phi/M} X\to \mathbf{R}$ whose restriction to the summand $X$ corresponding to the block $[\phi]$ is $\phi$ and that maps all other summands to $0$. %Define $T\colon M\to M'$ to map $g$ in $M$ to $\coprod_{\Psi\in \Phi/M} g$ in $M'$.  
Note that both of the functions $\alpha$ and $T$ are bijections. Furthermore they form a SEO between $(\Phi,M)$ and $(\coprod_{\Psi\in \Phi/M} \Psi, M')$.

 \begin{proposition}\label{adgsdghf}
 The SEO $(\alpha,T)\colon (\Phi,M)\to (\coprod_{\Psi\in \Phi/M} \Psi, M')$ is an isomorphism.
  \end{proposition}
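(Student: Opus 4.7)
The plan is to invoke the characterization stated just before the proposition: a SEO $(\alpha,T)$ is an isomorphism if and only if both component functions are bijections. The paragraph preceding the statement already asserts that $\alpha$ and $T$ form a SEO, so all that remains is to verify that each is a bijection.

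First I would handle $T\colon M\to M'$. Surjectivity is immediate because $M'$ is defined as the image of the assignment $g\mapsto \coprod_{\Psi\in\Phi/M} g$. Injectivity is equally routine: if $\coprod_\Psi g$ and $\coprod_\Psi g'$ agree as functions on $\coprod_\Psi X$, restricting to any single summand forces $g=g'$.

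Next I would handle $\alpha\colon \Phi\to \coprod_{\Psi\in\Phi/M}\Psi$. Unpacking the coproduct construction from Section~\ref{asfdsfghfw}, an element of $\coprod_{\Psi\in\Phi/M}\Psi$ is a function on $\coprod_\Psi X$ whose restriction is the zero function on every summand but one, where it coincides with some measurement in the corresponding block. Such an element is therefore uniquely recorded by a pair $(\Psi_0,\phi)$ with $\phi\in\Psi_0$, and under this bookkeeping $\alpha$ sends $\phi$ to $([\phi],\phi)$. The inverse assignment $(\Psi_0,\phi)\mapsto \phi$ is well defined since $[\phi]=\Psi_0$ whenever $\phi\in\Psi_0$, and the two compositions are visibly the identity; this gives bijectivity.

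I do not anticipate a real obstacle here; the only care points are to unpack the coproduct-of-data-sets structure carefully, and to notice the implicit fact that $[\phi g]=[\phi]$ for every $g\in M$, which is what makes the SEO equation $\alpha(\phi g)=\alpha(\phi)\,T(g)$ consistent with the ``designated summand'' description of $\alpha$.
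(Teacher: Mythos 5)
Your proposal is correct and matches the paper's own justification: the paper gives no separate proof but relies on the remark immediately preceding the proposition that $\alpha$ and $T$ are bijections, together with the characterization from Section~\ref{dfadfhbsfg} that a SEO is an isomorphism precisely when both components are bijections. You simply spell out the bijectivity checks (and the bookkeeping identification of elements of the coproduct with pairs $(\Psi_0,\phi)$) in slightly more detail than the paper does.
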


%%% groethendieck graphs
\section{Grothendieck graphs}
 In this section we explain a convenient  data structure   to encode  incarnations of  data sets.

A \textbf{Grothendieck graph} is a triple  $(V,M,E)$ consisting of a finite set $V$
  whose elements are called vertices, a finite set $M$
 whose elements are called colors or operations,  and a  subset
 $E\subset V\times M\times V$ whose elements are called edges, such that, for every vertex $v$ in $V$, the following composition is a bijection:
 \[\begin{tikzcd}
 (\{v\}\times M\times V)\cap E\ar[hook]{r} & E\ar[hook]{r} & V\times M\times V\ar{r}{\text{pr}_M} & M.
 \end{tikzcd}\]
This condition assures that, for every $v$ in $V$ and $g$ in $M$, there is a unique element  in $V$, denoted by $vg$,
 such that $(v,g,vg)$ is an edge in  $E$. 
 For example let $(\Phi,M)$ be an incarnation of a data set $\Phi$. Define:
  \[E_{\Phi,M}:=\{(\phi,g,\psi)\in \Phi\times M\times\Phi\ |\ \phi g=\psi\}.\]
 Then the triple $(\Phi,M,E_{\Phi,M})$ is a Grothendieck graph. We think about this graph as  a convenient data structure representing   the incarnation $(\Phi,M)$.
  
  Grothendieck graphs are also convenient  to represent SEOs.
 Define a  \textbf{morphism between  Grothendieck  graphs} $(V,M,E)$ and  $(W,N,F)$ to be a pair of functions
 $\alpha\colon V\to W$ and $T\colon M\to N$ such that, if $(v,g,w)$ belongs to $E$, then $(\alpha(v),T(g),\alpha(w))$ belongs to $F$.
 Such a morphism is denoted as $(\alpha,T)\colon (V,M,E)\to (W,N,F)$.
 %For example $(\text{id}_{V},\text{id}_{M})\colon  (V,M,E)\to  (V,M,E)$ is a morphism. 
 Componentwise composition 
 defines  a category structure on the collection  of Grothendieck  graphs and we use the symbol GGraph  to denote this category.
 If $(\alpha,T)\colon (\Phi,M)\to (\Psi,N)$ is a SEO, then $(\alpha, T)\colon (\Phi,M,E_{\Phi,M})\to (\Psi,N,E_{\Psi,N})$
 is a morphism between the associated Grothendieck graphs. By  assigning to  a SEO $(\alpha,T)$  the 
graph morphism given by the same pair $(\alpha,T)$, we obtain a  fully faithful functor   from the category  $\text{Nirvana}$ to $\text{GGraph}$. 
 %Since  this  functor is  fully faithful, it is denoted   simply 
 %as   $\text{Nirvana}\subset  \text{GGraph}$. 

 %Since this morphism assignment is a bijection, the obtained  functor is  fully faithful, and hence it is denoted   simply 
 %as an inclusion  $\text{Nirvana}\subset  \text{GGraph}$. 
  
  Grothendieck graphs can also be used to encode  pseudometric information on incarnations. A pseudometric on  a Grothendieck graph  $(V,M,E)$ is a pseudometric $d$ on $V$ such that  $d(v,w)\geq d(vg,wg)$ for all
 $v$ and  $w$ in $V$, and $g$ in $M$. 
 %We define a \textbf{metric Grothendieck graph} to be a pair 
 %$((V,M,E),d)$ consisting of a Grothendieck graph $(V,M,E)$ and  a pseudmetric $d$ on it.
 For example, the pseudometric
 $\|\phi-\psi\|_\infty$ on $\Phi$ is a pseudometric on the graph $(\Phi,M,E_{\Phi,M})$.

A Grothendieck  graph $(V,M,E)$ is said to be compatible with a  monoid structure on $M$ if $(v,1,v)$ is  in $E$, and  whenever $(v_0,g_0,v_1)$ and $(v_1,g_1,v_2)$
 belong to $E$, then so does $(v_0,g_1g_0,v_2)$. In this case the composition operation given by the association   $(v_0,g_0,v_1) (v_1,g_1,v_2)\mapsto (v_0,g_1g_0,v_2)$ defines a category  structure, denoted by $\text{Gr}_MV$,  with  $V$ as the set of objects and   $E$ as the set of morphisms. This category is a  familiar Grothendieck construction~\cite{MR1926776, MR510404}. For example,
  the  Grothendieck graph associated  with  a  monoid incarnation $(\Phi,M)$  is compatible with the  monoid structure on  $M$.
  We think about   $\text{Gr}_{M}\Phi$  as an additional structure on the data set $\Phi$:
objects are the measurements  in $\Phi$, morphisms are  triples  $(\phi,g,\phi g)$, where $\phi$ is  in $\Phi$,   $g$ is  in $M$, and the composition of $ (\phi,g,\phi g)$ and $(\phi g,h,\phi gh)$  is given by
$(\phi,gh,\phi gh)$.

 % Let $d$ and $d'$ be choices of metrics on Grothendieck graphs $(V,M,E)$ and $(W,N,F)$. A morphism
 %$(\alpha,T)\colon (V,M,E)\to (W,N,F)$ is called non-expansive if $\alpha\colon V\to W$ is non-expansive.  

  A \textbf{contravariant functor} indexed by a Grothendieck  graph $(V,M,E)$ with values in a category $\mathcal{C}$,
 denoted  as $P\colon (V,M,E)\to \mathcal{C}$, is by definition a sequence of objects $\{P(v)\ |\ v\in V\}$  and  a sequence of morphisms
 $\{P(v_0,g,v_1)\colon P(v_1)\to P(v_0)\ |\ (v_0,g,v_1)\in E\}$ in $\mathcal{C}$ subject to: if $(v_0,g_0,v_1)$, $(v_1,g_1,v_2)$, and $(v_0,h,v_2)$ are edges
 in $E$, then $P(v_2,h,v_0)=P(v_2,g_1,v_1)P(v_1,g_0,v_0)$. 
 If  $(V,M,E)$ is  compatible with a  monoid structure on $M$, then a  contravariant functor indexed  by $(V,M,E)$
 is simply a contravariant functor indexed by the category $\text{Gr}_MV$. 
 
 Let $(\Phi,M)$ be an  incarnation of a data set $\Phi$ consisting of measurements on $X$, and  $(\Phi,M,E_{\Phi,M})$  be the associated Grothendieck graph. 
 For every $g$ in $M$, the function $-g\colon\Phi\to\Phi$, mapping $\phi$ to $\phi g$, is geometric and realized by $g\colon X\to X$
 (see Section~\ref{aSFDFGH}).  Persistent homology  leads therefore to the following collections of objects and morphisms in $\text{Tame}( [0,\infty)\times  {\mathbf R},\text{Vect})$ as explained in Section~\ref{aSFDFGH}: 
 \[\left\{PH_d^{\Phi}(\phi)\  |\  \phi\in\Phi\right\},\]
 \[\left\{PH_d^{-g}(\phi)\colon PH_d^{\Phi}(\phi g)\to  PH_d^{\Phi}(\phi)\  |\  (\phi,g,\phi g)\in E_{\Phi,M}\right\}.\]
 These sequences form a  functor $PH^{\Phi}_d\colon (\Phi,M,E_{\Phi,M})\to \text{Tame}( [0,\infty)\times  {\mathbf R},\text{Vect})$
 also referred to as the persistent homology functor of the incarnation $(\Phi,M)$.
 
  Let  $(\alpha,T)\colon  (W,N,F)\to (V,M,E)$ be a morphism and   $P\colon (V,M,E)\to \mathcal{C}$ be a functor.
 The following sequences of objects and morphisms in $\mathcal{C}$ form  a  contravariant functor  denoted by $P(\alpha,T)\colon (W,N,F)\to \mathcal{C}$ and called the \textbf{composition} of  $(\alpha,T)$ with $P$:
  \[\left\{P(\alpha(v))\ |\ v\in V\right\},\]
  \[ \left\{P(w_0,g,w_1)\colon P(\alpha(w_1))\to P(\alpha(w_0))\ |\ (w_0,g,w_1)\in F\right\}.\]
   For example, let  $(\text{id}_{\Phi},i)\colon (\Phi,M)\to (\Phi,\text{End}_{\Phi}(X))$ be the canonical SEO (see Section~\ref{dfadfhbsfg}). Consider  the induced morphism
  of the associated Grothendieck graphs:
  \[(\text{id}_{\Phi},i_M)\colon (\Phi,M,E_{\Phi,M})\to (\Phi, \text{End}_{\Phi}(X),E_{\Phi,\text{End}_{\Phi}(X)}).\]
  Consider also  the persistent homology of the universal incarnation:
  \[PH^{\Phi}_d\colon (\Phi,\text{End}_{\Phi}(X),E_{\Phi,\text{End}_{\Phi}(X)})\to \text{Tame}( [0,\infty)\times  {\mathbf R},\text{Vect}).\]
  The composition of these two functors coincides with the
  persistent homology  of the incarnation $(\Phi,M)$:
 \[PH^{\Phi}_d\colon (\Phi,M,E_{\Phi,M})\to \text{Tame}( [0,\infty)\times  {\mathbf R},\text{Vect}).\] 
 In this way we obtain a commutative diagram:
 \[ \begin{tikzcd}[column sep=25]
  &  (\Phi, \text{End}_{\Phi}(X),E_{\Phi,\text{End}_{\Phi}(X)})\ar[bend left=8]{rd}{PH^{\Phi}_d}  & [-30pt]\\
  (\Phi,M,E_{\Phi,M})\ar[bend left=8]{ru}{(\text{id}_{\Phi},i_M)}\ar{rr}{PH^{\Phi}_d} & &  \text{Tame}( [0,\infty)\times  {\mathbf R},\text{Vect})
 \end{tikzcd}\]
   
  Such a commutativity does not hold for arbitrary SEOs.  Consider a SEO  $(\alpha,T)\colon (\Phi,M)\to (\Psi,N)$. 
  We can form two functors indexed by the graph  $(\Phi,M,E_{\Phi,M})$:
  \[\begin{tikzcd}[row sep=4, column sep=24]
 &  &\text{Tame}( [0,\infty)\times  {\mathbf R},\text{Vect})\\
  (\Phi,M,E_{\Phi,M})\ar[bend left=8, end anchor=west]{urr}{PH_d^\Phi} \ar[bend right=15,end anchor=west]{rd}{\alpha}\\
  & (\Psi,N,E_{\Psi,N})\ar{r}{PH_d^\Psi} & \text{Tame}( [0,\infty)\times  {\mathbf R},\text{Vect})
 \end{tikzcd}\]
These functors  rarely  coincide. 
 However, in the case  $(\alpha, T)$ is geometric,   the morphisms $PH_d^{\alpha}(\phi)\colon PH_d^{\Psi}(\alpha(\phi))\to  PH_d^{\Phi}(\phi)$
 (see Section~\ref{aSFDFGH}),
 for all $\phi$ in $\Phi$, 
   form a natural transformation.

%% conclusions
\section{Conclusions}
In the following figure we give a graphical representation  of some of the concepts introduced in this article.
Data sets can be equipped with three structures: a pseudometric, an incarnation describing an action, and a Grothendieck graph. 
We imagine Nirvana as the landscape of all possible incarnations of data sets, represented  by the shaded region in the following figure.
Each point  in Nirvana  has  a lot of internal structure allowing the extraction of persistent homology.
In this landscape the black arrows represent   geometric SEOs and the grey ones non-geometric SEOs. 
Recall that  geometric SEOs enable us to compare relevant persistent homology.  Non-geometric SEOs contain complementary  information.
\begin{figure}[h!]
%\begin{minipage}{1.5\textwidth}
	\centering
	\includegraphics[scale=0.53]{picture.pdf}	
%\end{minipage}
\label{embedings}
\end{figure}

%In our future work we would like to extend the framework depicted in this article to the case in which the set $M$ acting on the data set has less structure than that of a monoid. %It will be in our interest also to apply this methodology in the field of machine learning, where neural networks can be seen as particular kind of SEOs. In the domain of transfer learning (\cite{pratt93},\cite{weiss16}), for example, elements of a trained neural network are used to solve in a easier and better way new problem. Substituting these patches of a network with SEOs, could yield results that are more interpretable, since we can have full control of the properties that a SEO preserves.

\section*{Acknowledgements}
The research carried out by N.Q. was partially supported by GNSAGA-INdAM (Italy).
A.D. has been supported by the SmartData@PoliTO center on Big Data and Data Science and by the Italian MIUR Award ``Dipartimento di Eccellenza 2018-202'' - CUP: E11G18000350001. The work of W.C. and F.T. was partially supported by the Wallenberg AI, Autonomous System and Software Program (WASP) funded by Knut and Alice Wallenberg Foundation. The work of W.C. was also in part funded by VR and G\"oran Gustafsson foundation.

\newpage %want the picture before the bibliography

\bibliographystyle{plainurl}
\bibliography{references_copy}
\end{document}